\documentclass[a4paper,11pt]{amsart}
\usepackage[latin1]{inputenc}
\usepackage[T1]{fontenc}
\usepackage{amssymb,amsfonts,amsmath,amsthm}
\usepackage{color}
\usepackage{setspace}
\usepackage{paralist}
\usepackage{epsfig,graphicx,psfrag}
\usepackage[colorlinks=true, linkcolor=blue, citecolor=red,
urlcolor=blue]{hyperref}




\linespread{1}



\newcounter{notes}
%


\newcommand{\ignore}[1]{}




\newtheorem{theorem}{Theorem}
\newtheorem{proposition}[theorem]{Proposition}
\newtheorem{corollary}[theorem]{Corollary}
\newtheorem{lemma}[theorem]{Lemma}

\newtheorem{question}[theorem]{Question}

\theoremstyle{definition}
\newtheorem{definition}[theorem]{Definition}

\theoremstyle{remark}
\newtheorem{remark}[theorem]{Remark}

\newtheoremstyle{theoremwithref}{}{}{\itshape}{}{\bfseries}{.}{.5em}{#1 #2 #3}
\theoremstyle{theoremwithref}

\newcommand{\PP}{\mathbf{P}}
\newcommand{\CC}{\mathbf{C}}

\newcommand{\RR}{\mathbf{R}}

\newcommand{\KK}{\mathbf{K}}
\newcommand{\GL}{\mathrm{GL}}
\newcommand{\SL}{\mathrm{SL}}
\newcommand{\PSL}{\mathrm{PSL}}
\newcommand{\SO}{\mathrm{SO}}

\newcommand{\U}{\mathrm{U}}
\newcommand{\Sp}{\mathrm{Sp}}

\newcommand{\id}{\mathrm{Id}}


\begin{document}

\title{Representations and geometric structures}

\author[A. Wienhard]{Anna Wienhard}
\address{Ruprecht-Karls Universit\"at Heidelberg, Mathematisches Institut, Im Neuenheimer Feld~288, 69120 Heidelberg, Germany
\newline HITS gGmbH, Heidelberg Institute for Theoretical Studies, Schloss-Wolfs\-brunnen\-weg 35, 69118 Heidelberg, Germany}
\email{wienhard@mathi.uni-heidelberg.de}

\thanks{The author was partially supported by the National Science Foundation under agreements DMS-1065919 and 0846408, by the Sloan Foundation, by the Deutsche Forschungsgemeinschaft, by the European Research Council under ERC-Consolidator grant no.\ 614733, and by the Klaus-Tschira-Foundation.}

\maketitle

\section{Introduction}
Bill Thurston has had a tremendous impact on shaping the geometric imagination of many mathematicians. His new ways of thinking about geometric shapes changed the way we now think about hyperbolic surfaces, three-manifolds, and higher dimensional hyperbolic manifolds. 

There are many interesting geometries besides hyperbolic geometry, as for example complex hyperbolic geometry, projective geometry or other higher rank geometries, i.e. geometries whose transformation groups 
are semisimple Lie groups of higher rank, as for example 
$\SL(n,\RR)$, $n\geq 3$, or $\Sp(2n,\RR)$, $n\geq2$. 
In recent years several developments led to the discovery of 
interesting classes of discrete subgroups of Lie groups, which parametrize non-trivial deformation spaces of geometric structures, which are governed by higher rank Lie groups. 
One such development is the theory of Anosov representations of word hyperbolic groups which has been introduced by Fran\c{c}ois Labourie \cite{Labourie_anosov}.
The notion of Anosov representations is defined for representation of a finitely generated word hyperbolic group into any semisimple (even reductive) non-compact Lie group. 
However, in this article we restrict our attention to representations into the symplectic groups $\Sp(2n,\RR)$ and $\Sp(2n,\CC)$.  

The aim of this article is to describe, in a rather informal way and in a special case, some of the geometric aspects of Anosov representations, which lead us to think about manifolds modelled on higher rank 
geometries and try to build a geometric imagination for such geometric structures. 

We are dearly missing Bill Thurston's imagination. We will have to rely on and build our own. 
\section{Preliminaries}
\subsection{The symplectic group}
A symplectic vector space is a pair $(V_\KK, \omega_\KK)$, where $V_\KK$ is a $2n$-dimensional $\KK$-vector space, for  $\KK = \RR$ or $\CC$, and $\omega_\KK$ is a non-degenerate skew-symmetric bilinear form on $V$. 
The linear (real or complex) symplectic group of $(V_\KK,\omega_\KK)$ is 
$$
\Sp(V_\KK, \omega) = \{ g \in \GL(V_\KK)\, |\, \omega_\KK( gv, gw) = \omega_\KK(v,w) \text{ for all } v,w \in V_\KK\}
$$
If we choose a linear isomorphism $V_\KK \cong \KK^{2n}$ and represent $\omega_\KK$ by the matrix $ \begin{pmatrix}
  0 & \id \\
  -\id & 0
 \end{pmatrix}$, i.e.
$
\omega_\KK ( v, w) = v^T \begin{pmatrix}
  0 & \id \\
  -\id & 0
 \end{pmatrix} w,$ 
 then \[\Sp(V_\KK, \omega) \cong \Sp(2n,\KK) = \left\{g \in \GL(2n,\KK) \,|\,  g^T\begin{pmatrix}
  0 & \id \\
  -\id & 0
 \end{pmatrix} g = \begin{pmatrix}
  0 & \id \\
  -\id & 0
 \end{pmatrix}\right\}.\] 
 
We will sometimes denote $V_\RR$ simply by $V$ and $\omega_\RR$ by $\omega$. If $V_\CC$ is the complexification of $V$ and $\omega_\CC$ is the complex linear extension of $\omega$, we have a natural embedding of Lie groups 
\[\Sp(2n,\RR)\cong\Sp(V, \omega) \rightarrow \Sp(V_\CC, \omega) \cong \Sp(2n,\CC).\]

The symplectic group $\Sp(V_\KK, \omega_\KK)$ acts transitively on various homogeneous spaces: 
\begin{enumerate}
\item The Riemannian symmetric space $X_\RR(n)$ associated to $\Sp(2n,\RR)$, which can be identified with \[X_\RR(n) \cong\Sp(2n,\RR)/ \U(n).\]
\item The Riemannian symmetric space $X_\CC(n)$ associated to $\Sp(2n,\CC)$, which can be identified with \[X_\CC(n) \cong\Sp(2n,\CC)/ \Sp(n).\]
\item The affine symmetric spaces $X_{p,q}$,  $p+q = n$, which can be identified with  
\[X_{p,q}\cong \Sp(2n,\RR)/ \U(p,q).\] 
The extreme cases being $X_{0,n} = X_\RR (n) = X_{n,0}$. 

\item The space $\mathrm{Is}_i(V_\KK)$ of $\omega_\KK$-isotropic subspace of $V_\KK$ of dimension $i$. We can identify 
 \[ \mathrm{Is}_i(V_\KK) \cong \Sp(V_\KK, \omega_\KK)/ Q_1(V_\KK),\] where $Q_i(V_\KK)$ denotes the stabilizer of an isotropic i-dimensional subspace of $V_\KK$
Of particular interest to us will be the projective space $ \PP(V_\KK)$, which can be identified with  $\mathrm{Is}_1(V_\KK)$, and the 
the space of Lagrangians $ \mathrm{Lag}(V_\KK)$, which can be identified with $\mathrm{Is}_n(V_\KK)$. 

\end{enumerate}

\begin{remark}
\begin{enumerate}
\item 
Recall that a subspace 
$L\subset V_\KK$ is $\omega_\KK$-isotropic if $L \subset L^{\perp_{\omega_\KK}}$, where $L^{\perp_{\omega_\KK}}$ is the orthogonal complement with respect to $\omega_\KK$. Since $\omega_K$ is skew-symmetric, any one-dimensional subspace is isotropic. An isotropic subspace $L$ is maximal if it is of dimension $n$, or equivalently if $L = L^{\perp_{\omega_\KK}}$. In this case $L$ is called a Lagrangian subspace.
\item Two elements $F, F' \in \mathrm{Is}_i(V_\KK)$ are said to be {\em transverse} if $F \oplus F'^{\perp_{\omega_\KK}} = V_\KK = F' \oplus F^{\perp_{\omega_\KK}} $.
\item For every isotropic subspace  $Z \in \mathrm{Is}_i(V_\KK)$, the symplectic form $\omega_\KK$ induces a non-degenerate skew-symmetric bilinear form on the vector space $Z^{\perp_{\omega_\KK}}/ Z$, turning  $Z^{\perp_{\omega_\KK}}/ Z$ into a $2(n-i)$-dimensional symplectic vector space.

\end{enumerate}

\end{remark}

\subsection{Decomposition of the space of complex Lagrangian subspaces} 
Let $(V, \omega)$ be a symplectic vector space and  $(V_\CC, \omega_\CC)$ its complexification Then the space 
of complex Lagrangians $\mathrm{Lag}(V_\CC)$ decomposes into several $\Sp(V,\omega)$-orbits. 

First, there is a decomposition with respect to the real structure: for every $i = 0, \cdots, n$ we set 
$$\mathcal{R}_i = \{ W \, |\, dim(W \cap \overline{W} ) = i\},$$ where $\overline{W}$ is the complex conjugation with respect to $V \subset V_\CC$. 

\begin{proposition}\label{prop:orbits}
The set $\mathcal{R}_i$ fibers over the space $\mathrm{Is}_i(V)$ of $i$-dimensional  isotropic subspaces in $V$ with fiber isomorphic to $\bigcup_{p'+q' = n-i} X_{p',q'}$. 
The set $\mathcal{R}_n$ is the unique closed $\Sp(V,\omega)$ orbit and identifies with $\mathrm{Lag}(V)$.  The set $\mathcal{R}_0$ is the union of the $n+1$ open $\Sp(V,\omega)$-orbits, $\mathcal{R}_0 = \bigcup_{p=0, \cdots, n} H_{p,q}$, where $H_{p,q} \cong X_{p,q}$.  
\end{proposition}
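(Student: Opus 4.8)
The plan is to stratify the compact space $\mathrm{Lag}(V_\CC)$ by the $\Sp(V,\omega)$-invariant function $W\mapsto\dim_\CC(W\cap\overline{W})$ — whose level sets are, by definition, the $\mathcal{R}_i$ — and to analyse each stratum by reducing it to the top stratum $\mathcal{R}_0$ of a symplectic space of smaller dimension. First I would note that for $W\in\mathcal{R}_i$ the subspace $W\cap\overline{W}$ is invariant under complex conjugation, hence the complexification of the real subspace $\pi(W):=(W\cap\overline{W})\cap V$; since $W\cap\overline{W}\subset W$ and $W$ is Lagrangian, $\pi(W)$ is $\omega$-isotropic of dimension $i$. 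This gives an $\Sp(V,\omega)$-equivariant map $\pi\colon\mathcal{R}_i\to\mathrm{Is}_i(V)$, and because $\mathrm{Is}_i(V)=\Sp(V,\omega)/Q_i$ is homogeneous, $\pi$ is automatically a fiber bundle; it remains to identify the fiber over a fixed $Z_0$. Here the key tool is part (3) of the Remark above: from $Z_{0,\CC}=W\cap\overline{W}\subset W$ and $\omega_\CC|_W=0$ one gets $W\subset Z_{0,\CC}^{\perp_{\omega_\CC}}$, so $W$ descends to a complex Lagrangian $W/Z_{0,\CC}$ of the $2(n-i)$-dimensional complex symplectic space $V'_\CC:=Z_{0,\CC}^{\perp_{\omega_\CC}}/Z_{0,\CC}$, and $W\mapsto W/Z_{0,\CC}$ is a bijection from $\pi^{-1}(Z_0)$ onto $\mathcal{R}_0(V'_\CC):=\{W'\in\mathrm{Lag}(V'_\CC):W'\cap\overline{W'}=0\}$: indeed $(W/Z_{0,\CC})\cap\overline{(W/Z_{0,\CC})}=(W\cap\overline{W})/Z_{0,\CC}=0$, and conversely every complex Lagrangian of $V'_\CC$ transverse to its conjugate lifts to such a $W$ (using part (3) of the Remark to see the lift is $\omega_\CC$-isotropic). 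Everything thus reduces to describing $\mathcal{R}_0$ for a symplectic space of dimension $2m$.

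For $W\in\mathcal{R}_0$ one has $V_\CC=W\oplus\overline{W}$, and the heart of the argument is to identify $\mathcal{R}_0$ with the set of linear complex structures $J$ on $V$ (maps with $J^2=-\id$) that lie in $\Sp(V,\omega)$: to $J$ associate its $+i$-eigenspace in $V_\CC$, which is Lagrangian and transverse to its conjugate (the $-i$-eigenspace); conversely declare $J$ to act as $\pm i\cdot\id$ on $W$, resp.\ $\overline{W}$, which a direct check on the decomposition $V_\CC=W\oplus\overline{W}$ shows is real and symplectic. Such a $J$ turns $V$ into an $n$-dimensional complex vector space carrying the nondegenerate Hermitian form $h=\omega(\cdot,J\cdot)+i\,\omega$, whose signature $(p,q)$, $p+q=n$, is a complete invariant of $J$ under the conjugation action of $\Sp(V,\omega)$ (Witt's theorem for Hermitian forms); moreover $\Sp(V,\omega)$ acts transitively on the complex structures of a fixed signature with stabilizer the $h$-unitary group $\U(p,q)$, and every signature occurs (take block-diagonal models). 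Hence $\mathcal{R}_0=\bigsqcup_{p+q=n}H_{p,q}$ with $H_{p,q}=\Sp(2n,\RR)/\U(p,q)\cong X_{p,q}$; feeding this back into the fibration, the fiber of $\pi\colon\mathcal{R}_i\to\mathrm{Is}_i(V)$ is $\bigcup_{p'+q'=n-i}X_{p',q'}$.

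It remains to settle the topological assertions. For $i=n$: $W\cap\overline{W}=W$ forces $W=\overline{W}=(W\cap V)_\CC$ with $W\cap V\in\mathrm{Lag}(V)$, so $\mathcal{R}_n\cong\mathrm{Lag}(V)$, which is a single $\Sp(V,\omega)$-orbit (Witt's theorem for $\omega$) and compact (a flag manifold of $\Sp(2n,\RR)$), hence closed in $\mathrm{Lag}(V_\CC)$. The condition $W\cap\overline{W}=0$ is open, and along $\mathcal{R}_0$ the signature of the continuously varying form $h$ is locally constant, so each $H_{p,q}$ is open in $\mathrm{Lag}(V_\CC)$; connectedness of $\Sp(2n,\RR)$ then identifies the $H_{p,q}$ with the connected components of $\mathcal{R}_0$. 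To see that these exhaust the open orbits I would compute, from the fibration, $\dim_\RR\mathcal{R}_i=\dim_\RR\mathrm{Is}_i(V)+(n-i)(n-i+1)$, and check elementarily that this equals $\dim_\RR\mathrm{Lag}(V_\CC)=n(n+1)$ exactly when $i=0$ and is strictly smaller for $i\ge1$; since every orbit lies in one $\mathcal{R}_i$, an open orbit must lie in $\mathcal{R}_0$ and hence equal some $H_{p,q}$. Finally, if $\mathcal{O}$ is a closed (hence compact) orbit contained in $\mathcal{R}_i$ with $i\ge1$, then $\pi(\mathcal{O})=\mathrm{Is}_i(V)$ and the fiber $\mathcal{O}\cap\pi^{-1}(Z_0)$ is a compact single $Q_i$-orbit; as $Q_i$ acts on $\pi^{-1}(Z_0)\cong\mathcal{R}_0(V'_\CC)$ through its quotient acting as the symplectic group of $V'=Z_0^{\perp_\omega}/Z_0$, this fiber is one of the pieces $H_{p',q'}$ with $p'+q'=n-i\ge1$, which is non-compact — a contradiction. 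So $i=n$ and $\mathcal{R}_n$ is the unique closed orbit.

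I expect the main obstacle to be setting up the two reductions cleanly — the bijection $\pi^{-1}(Z_0)\cong\mathcal{R}_0(V'_\CC)$ and the dictionary between $\mathcal{R}_0$ and complex structures $J\in\Sp(V,\omega)$; once these are in place the orbit classification is just the signature invariant, and the openness/closedness statements are bookkeeping around a dimension count, the fussiest point being the uniqueness of the closed orbit.
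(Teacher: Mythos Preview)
Your argument is correct and follows the same overall architecture as the paper's proof: define the projection $\mathcal{R}_i\to\mathrm{Is}_i(V)$ via the real form of $W\cap\overline{W}$, identify the fibre with $\mathcal{R}_0$ of the reduced symplectic space $Z^{\perp}/Z$, and then classify $\mathcal{R}_0$ by a signature invariant. The one substantive difference is in how the $\mathcal{R}_0$-classification is packaged. The paper works with the fixed Hermitian form $h(v,w)=i\,\omega_\CC(\overline{v},w)$ of signature $(n,n)$ on $V_\CC$ and reads off $(p,q)$ as the signature of $h|_{W\times W}$; you instead pass through the dictionary $W\leftrightarrow J$ between $\mathcal{R}_0$ and $\omega$-compatible complex structures on $V$, and take the signature of $\omega(\cdot,J\cdot)$. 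These are equivalent (under $V\ni v\mapsto \tfrac12(v-iJv)\in W$ the two Hermitian forms agree up to a positive scalar), and each has its advantages: the paper's version makes the $\Sp(V,\omega)$-invariance of the stratification by signature immediate, while your version makes the identification of the stabiliser with $\U(p,q)$ transparent. You also supply the dimension count for openness and the compactness contradiction for uniqueness of the closed orbit, both of which the paper leaves to the reader; your treatment of the latter (pushing a hypothetical compact orbit in $\mathcal{R}_i$, $1\le i\le n-1$, down to a compact $H_{p',q'}$ with $p'+q'\ge 1$) is a clean way to close that gap.
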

\begin{proof}
We sketch the proof, leaving some verifications to the reader. A more detailed description of the embedding $X_\RR(n)$  into $\mathrm{Lag}(V_\CC)$ can be found for example in \cite{Satake_book}. 

Note that the imaginary part of $\omega_\CC$ gives rise to a non-degenerate Hermitian form $h$ of signature $(n,n)$ on $V_\CC$, 
\( h(v,w) = i \omega_\CC(\overline{v}, w) \)
This Hermitian form is preserved by $\Sp(V,\omega)$.  

Let us first describe $\mathcal{R}_0$. 
It is an easy calculation that if $W \in \mathcal{R}_0$, then the restriction of $h$ to $W\times W$ is non-degenerate and of signature $(p,q)$ with $p+q = n$. 
The set  $\mathrm{R}_0$ then decomposes into the disjoint union 
 \[\mathcal{R}_0 = \bigcup_{p=0, \cdots, n} \mathcal{H}_{p,q},\] 
 where 
 \[
 \mathcal{H}_{p,q} = \{ W \in\mathcal{R}_0\, |\, h|_{W\times W} \text{ is of signature } (p,q)\}.
 \]
On each $ \mathcal{H}_{p,q} $ the group $\Sp(V,\omega)$ acts transitively, and the stabilizer of $W\in \mathcal{H}_{p,q} $ is $U(W, h|_{W\times}) \cong U(p,q)$. Thus $ \mathcal{H}_{p,q} \cong X_{p,q}$.

If $W \in \mathcal{R}_i$, then $Z= W\cap \overline{W}$ is an $i$-dimensional (complex) isotropic subspace of $V_\CC$. Since $\overline{Z} = Z$, it is the complexification of a real $i$-dimensional isotropic subspace $Z'$ of $V$. This defines the projection $\mathcal{R}_i \rightarrow \mathrm{Is}_i(V)$.

 To describe the fiber consider $M=Z^{\perp_{\omega_\CC}}/Z$. The symplectic form $\omega_\CC$ induces a symplectic form on $M$, the real structure induces a real structure, and $h$ induces a non-degenerate Hermitian form of signature $(n-i, n-i)$. Note that any $W \in \mathcal{R}_i$ for which  $W\cap \overline{W} = Z$ is uniquely determined by a maximal isotropic (Lagrangian) subspace  $Y$ of $M$, which satisfies $Y\cap \overline{Y} = \{0\}$. Thus the description of $\mathcal{R}_0$ above  gives the description of the fiber. 
 In particular, $\mathrm{R}_i$ also decomposes into several $\Sp(V,\omega)$-orbits $H^i_{p', q'}$, where $p' + q' = n-i$ and $H^i_{p', q'}$ is the component of $\mathcal{R}_i$ with fiber isomorphic to $X_{p',q'}$. 

If $W \in \mathcal{R}_n$, then $W = \overline{W}$ and hence $W$ is the complexification of a real maximal $\omega$-isotropic subspace of $V$. The fiber of the projection $\mathcal{R}_n \to \mathrm{Lag}(V)$ is trivial.  
This gives the identification $\mathcal{R}_n \cong \mathrm{Lag}(V)$. In particular, $\mathcal{R}_n$ is a compact, and hence closed $\Sp(V,\omega)$-orbit. 
\end{proof}

Proposition~\ref{prop:orbits} gives explicit embeddings
\[X_{p,q} \hookrightarrow H_{p,q} \subset \mathrm{Lag}(V_\CC).\] Since $\mathrm{Lag}(V_\CC)$ is compact, the closure of $H_{p,q} \subset \mathrm{Lag}(V_\CC)$ gives rise to a compactification of the affine symmetric spaces $X_{p,q}$. The closure of $H_{p,q}$ has the following structure 
\[cl(H_{p,q}) = \bigcup_{p' \leq p, \, q' \leq q } H^i_{p',q'},\] with $H_{0,0} := \mathcal{R}_n$. 

For the Riemannian symmetric space $X_\RR(n)$  the embedding $X_\RR(n) \hookrightarrow \mathrm{Lag}(V_\CC)$ is called the Harish-Chandra-embedding. The described compactification is isomorphic to a (minimal) Satake-Furstenberg compactification, and to the bounded symmetric domain compactification of $X_\RR(n)$. 

\begin{remark}
\begin{enumerate}
\item 
In fact the bounded symmetric domain realization of $X$ is usually derived from the above embedding: Choosing two transverse Lagrangian subspaces $V_+$ and $V_-$ of $V_\CC$, such that $h$ is positive definite on $V_+$ and negative definite on $V_-$, and such that $V_+$ and $V_-$ are orthogonal with respect to $h$,  any other space $W \in H_{n,0}$ is transverse to $V_-$, and thus can be written as a graph of a linear map from $V_+ \rightarrow V_-$. The condition that $W$ is Lagrangian implies that this map is symmetric and the condition that $h$ is positive definite on $W$ translates into the condition that $Id - \overline{Z} Z$ is positive definite. For details we refer to \cite{Satake_book}.
\item 
When $n=1$ the space $ \mathrm{Lag}(V_\CC) $ is just the projective space $ \CC\PP^1$, which  decomposes into three $\SL(2,\RR)$-orbits: upper hemisphere, lower hemisphere, equator. 
\end{enumerate}
\end{remark}

\subsection{Symplectic Anosov representations} 
We shortly recall here the notion of Anosov representations. We focus on representations into the symplectic group which are Anosov with respect to the stabilizer of an isotropic subspace of $V_\KK$. 
 The definition we give relies on a characterization of Anosov representations given in \cite{GGKW_anosov}, which can also be obtained from \cite{KapovichLeebPorti, KapovichLeebPorti14}. For general discussions of Anosov representations we refer to \cite{Labourie_anosov, Guichard_Wienhard_DoD, GGKW_anosov, KapovichLeebPorti, KapovichLeebPorti14, KapovichLeebPorti14_2}. 

Let $\Gamma$ be a finitely generated word hyperbolic group. Its boundary $\partial_\infty \Gamma$ is a compact space, equipped with a natural action of $\Gamma$ as a uniform convergence group. In particular, every element $\gamma \in \Gamma$ of infinite order has a unique attracting fix point $\gamma^+ \in \partial_\infty \Gamma$ and a unique repelling fix point $\gamma^- \in  \partial_\infty \Gamma$ such that for every $x \in \partial_\infty \Gamma \backslash \{\gamma^-\}$ we have $\lim_{n\to \infty} \gamma^n \cdot x =  \gamma^+$. In the following we will assume for simplicity that $\Gamma$ is torsion free. 

The symmetric spaces $X_\KK$ of $\Sp(2n,\KK)$  are symmetric spaces of non-compact type of rank $n$. In such spaces the Riemannian distance function only partially determines the relative position of two points. More precise information is given by their distance vector. For group elements $g \in \Sp(2n,\KK)$ the distance vector between a fixed base point $x_0 \in X_\KK$ and $g \cdot x_0$ is measured by a Cartan decomposition with respect to $x_0$. 
To describe this in more detail, let $K$ be a maximal compact subgroup of $\Sp(2n,\KK)$, i.e. $K = \U(n)$ if $\KK = \RR$ and $K = \Sp(n)$ if $\KK = \CC$.  Let 
$\mathfrak{a} \subset \mathfrak{s}\mathfrak{p}(2,\KK)$ be the subset of diagonal matrices with real entries, i.e. 
\[\mathfrak{a} = \{ \mathrm{diag}(\lambda_1, \cdots \lambda_n, - \lambda_n, \cdots - \lambda_1) \, |\, \lambda_i \in \RR \}.\]
Then $\Sp(2n,\KK)$ admits a decomposition  $\Sp(2n,\KK) = K \exp({\mathfrak{a}})K$, i.e.  any element $g \in \Sp(2n,\KK) $ can be written as $g = k_1 \exp{(a_g)} k_2$, with $a_g \in \mathfrak{a}$ and  $k_1, k_2 \in K$. This decomposition is in general not unique. 
But if we consider the cone 
\[\mathfrak{a}^+ = \{ diag(\lambda_1, \cdots \lambda_n, - \lambda_n, \cdots - \lambda_1) \in \mathfrak{a}\, |\, \lambda_1\geq \lambda_2\geq \cdots \geq \lambda_n\geq 0 \} \subset \mathfrak{a},\] 
then  any element $g \in \Sp(2n,\KK) $ can be written as $g = k_1 \exp{(a_g)} k_2$, with an unique $a_g \in \mathfrak{a}^+$ and  $k_1, k_2 \in K$. This decomposition  
\[\Sp(2n,\KK) = K \exp({\mathfrak{a}^+})K\] is called a Cartan decomposition. 

It induces the {\em Cartan projection}
\[
\mu: \Sp(2n,\KK) \rightarrow \mathfrak{a}^+, \, g \mapsto \mu(g) := a_g. 
\]

The Cartan projection is a continuous, proper surjective map. 

We denote by $\alpha_i \mathfrak{a}^*$, $ i = 1, \cdots, n$ the linear map given by  $\alpha_i = \epsilon_i - \epsilon_{i+1}$, where $\epsilon_i$ is the linear form which sends a diagonal matrix to its $i$-th eigenvalue.  
Note that $\alpha_n = \epsilon_n - \epsilon_{n+1} = 2 \epsilon_n$.

\begin{definition}\label{def:anosov}
Let $\Gamma$ be a finitely generated word hyperbolic group and $\partial_\infty \Gamma$ its boundary.  A representation $\rho: \Gamma \to \Sp(V_\KK, \omega_\KK)$ is $Q_i$-Anosov, $i= 1, \cdots, n$ if the following holds
\begin{enumerate}
\item There exist a {\em continuous} $\rho$-equivariant map $\xi_i: \partial_\infty \Gamma \to \mathrm{Is}_i(V_\KK)$. 
\item The map $\xi_i$ is {\em dynamics preserving}, i.e. for every $\gamma \in \Gamma$ of infinite order, the images of the attracting resp. repelling fix point $\gamma^\pm$ under $\xi_i$ is the attracting resp. repelling fix point of $\rho(\gamma)$ in $\mathrm{Is}_i(V_\KK)$. 
\item The map $\xi_i$ is {\em transverse}, i.e. for every $t \neq t'$ in $\partial_\infty \Gamma$, the images under $\xi_i$ satisfy 
\[\xi_i(t) \oplus \xi_i(t')^{\perp_{\omega_\KK}} = V_\KK = \xi_i(t') \oplus \xi_i(t)^{\perp_{\omega_\KK}}.\] 
\item For every diverging sequence $\gamma_n$ in $\Gamma$ we have 
\[
\mathrm{lim}_{n\to \infty} \alpha_i(\mu(\gamma_n)) = \infty.
\]
\end{enumerate}
\end{definition}
\begin{remark}
\begin{enumerate}
\item 
The original definition \cite{Labourie_anosov, Guichard_Wienhard_DoD} uses the geodesic flow space of the hyperbolic group $\Gamma$. This definition is very useful in several applications, see for example \cite{Sambarino, Bridgeman_Canary_Labourie_Sambarino}.  Characterizations not involving the geodesic flow space are given in \cite{KapovichLeebPorti, KapovichLeebPorti14, KapovichLeebPorti14_2, GGKW_anosov}. Some of the characterizations in \cite{KapovichLeebPorti, KapovichLeebPorti14, KapovichLeebPorti14_2} do not require the group $\Gamma$ to be assumed to be word hyperbolic a priori.  
\item One can define $P$-Anosov representations with respect to any parabolic subgroup $P< \Sp(2n,\RR)$, but this notion essentially reduces to the notion of $Q_i$-Anosov representations above, because a representation is $P$-Anosov if it $Q_i$-Anosov for all maximal parabolic subgroups $Q_i$ which contain $P$.  For example, a representation $\rho: \Gamma \to \Sp(V_\KK, \omega_\KK)$ is $B$-Anosov, for $B<\Sp(2n,\KK)$ the minimal parabolic subgroups, if and only if it is $Q_i$-Anosov for every $i= 1,\cdots, n$.    
\item 
Under the embedding $\Sp(2n,\RR) \to \Sp(2n,\CC)$, a $Q_i$-Anosov representation $\rho: \pi_1(\Sigma) \to \Sp(2n,\RR)$ gives rise to a $Q_i$-Anosov representation $\rho_\CC:  \pi_1(\Sigma) \to \Sp(2n,\CC)$  into $\Sp(2n,\CC)$.
\end{enumerate}
\end{remark}

We recall, without proof several of the remarkable properties Anosov representations have. 
\begin{proposition}\label{prop:properties_individual}\cite{Labourie_anosov, Guichard_Wienhard_DoD}
Let $\rho: \Gamma \to \Sp(V_\KK, \omega_\KK)$ be a $Q_i$-Anosov representation. 
Then 
\begin{enumerate}
\item $\rho$ has finite kernel and discrete image. 
\item the map $\Gamma \rightarrow X_\KK(n), \, \gamma \mapsto \gamma x_0$, for $x_0\in X_\KK(n)$ fixed,  is a quasi-isometric embedding with respect to the word metric $|\cdot |_\Gamma$ on $\Gamma$ (with respect to a finite generating set) and the Riemannian distance $d_X$ on $X_\KK$, i.e. there exists constants $L,l>0$ such that for all $\gamma \in \Gamma$
\[
\frac{1}{L} |\gamma|_\Gamma - l \leq  d_X(\gamma x_0, x_0)\leq L |\gamma|_\Gamma + l, 
\]
where $x_0 \in X_\KK$ is a fixed base point. 
\item Every element of infinite order $\gamma \in \Gamma$ acts proximal on $\mathrm{Is}_i(V_\KK)$, i.e. there exists $F_\gamma^+$, $F_\gamma^-$ in $\mathrm{Is}_i(V_\KK)$ such that for every $F$ transverse to $F_\gamma^-$ the sequence 
$\rho(\gamma)^n F$ converges to $F_\gamma^+$, and for every $F'$ transverse to $F_\gamma^+$ the sequence  $\rho(\gamma^{-n}) F'$ converges to $F_\gamma^-$. In fact, for a $Q_i$-Anosov representation we have $F_\gamma^\pm = \xi_i(\gamma^\pm)$. 
\end{enumerate}
\end{proposition}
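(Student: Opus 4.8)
The plan is to extract all three statements from the characterization in Definition~\ref{def:anosov}, with condition~(4)---that $\alpha_i(\mu(\rho(\gamma_n)))\to\infty$ along every diverging sequence---as the main engine, and the continuous transverse equivariant boundary map $\xi_i$ as the source of uniformity. First I would fix a $K$-invariant inner product on $V_\KK$; then the Cartan projection records logarithms of singular values, $\alpha_i(\mu(g))=\log\bigl(\sigma_i(g)/\sigma_{i+1}(g)\bigr)$, and $d_X(gx_0,x_0)=\|\mu(g)\|$ for the Euclidean norm on $\mathfrak a$ attached to the metric at the base point $x_0$ fixed by $K$, with $\alpha_i(\mu(g))\le C\|\mu(g)\|$. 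Property~(1) is then immediate: if $\Ker\rho$ were infinite, a sequence of distinct elements of it would diverge in $\Gamma$ (an infinite subset of a finitely generated group leaves every finite set) while $\mu(\rho(\gamma_n))=\mu(\id)=0$, contradicting~(4); and if $\rho(\Gamma)$ were not discrete there would be a diverging sequence $\gamma_n$ with $\rho(\gamma_n)$ convergent in $\Sp(2n,\KK)$, hence $\mu(\rho(\gamma_n))$ bounded, again contradicting~(4).

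For~(2), the upper bound is formal: with $L:=\max_{s\in S}d_X(\rho(s)x_0,x_0)$ for a finite generating set $S$, invariance of $d_X$ and the triangle inequality along the orbit of a geodesic word give $d_X(\gamma x_0,x_0)\le L|\gamma|_\Gamma$. For the lower bound, set $\varphi(\gamma):=\alpha_i(\mu(\rho(\gamma)))\ge 0$. The crux is a \emph{coarse super-additivity} lemma: there is a constant $C$ (depending on $\rho$ and on the hyperbolicity constant of $\Gamma$) so that $\varphi(\gamma_1\gamma_2)\ge\varphi(\gamma_1)+\varphi(\gamma_2)-C$ whenever the concatenation $\gamma_1\gamma_2$ is (nearly) geodesic. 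This reduces to a linear-algebra fact about singular values: under \emph{uniform} transversality of the relevant $i$-dimensional subspaces read off from Cartan decompositions of $\rho(\gamma_1)$ and $\rho(\gamma_2)$, the $i$-th singular-value gap $\log(\sigma_i/\sigma_{i+1})$ of a product is at least the sum of those of the factors minus a bounded error; and uniform transversality holds because those Cartan flags converge, along geodesic rays, to flags of the form $\xi_i(\eta^\pm)$ with $\eta^\pm\in\partial_\infty\Gamma$, while transversality of $\xi_i$ over the \emph{compact} space $\partial_\infty\Gamma$ is uniform (here word-hyperbolicity enters, to make the boundary points of efficiently concatenating geodesics uniformly far apart). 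Granting the lemma, I would decompose a geodesic word for $\gamma$ into $\approx|\gamma|_\Gamma/R$ blocks $w_1,\dots,w_p$ of length $R$; each partial product $w_1\cdots w_j$ is a prefix of the geodesic, so the lemma applies at every step and yields $\varphi(\gamma)\ge\varphi(w_1)+\sum_{j\ge 2}\bigl(\varphi(w_j)-C\bigr)$. By~(4), taking $R$ large enough forces $\varphi(w)\ge 2C$ for every $w$ with $|w|_\Gamma=R$ (otherwise minimisers over the spheres of radius $R$ would form a diverging sequence on which $\varphi$ stays bounded); hence $\varphi(\gamma)\ge\tfrac{C}{R}|\gamma|_\Gamma-2C$, and a fortiori $d_X(\gamma x_0,x_0)=\|\mu(\rho(\gamma))\|\ge\tfrac1R|\gamma|_\Gamma-2$.

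For~(3), let $\gamma\in\Gamma$ have infinite order. Since $\Gamma$ is word-hyperbolic, $|\gamma^k|_\Gamma\to\infty$, so $\varphi(\gamma^k)=\log\bigl(\sigma_i(\rho(\gamma^k))/\sigma_{i+1}(\rho(\gamma^k))\bigr)\to\infty$ by~(4); using that $\mu(g^{-1})=\mu(g)$ in $\Sp(2n,\KK)$ (the multiset of singular values is invariant under $x\mapsto x^{-1}$), the same holds for $\gamma^{-k}$. The classical characterization of proximality then gives well-defined $F_\gamma^\pm\in\mathrm{Is}_i(V_\KK)$ --- limits of the top-$i$ singular subspaces of $\rho(\gamma^{k})$ and of $\rho(\gamma^{-k})$, pinned down because $\rho(\gamma^{k+1})=\rho(\gamma)\rho(\gamma^{k})$ --- realizing the contraction dynamics asserted in~(3), and the symplectic structure forces these limits to be isotropic. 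It remains to identify $F_\gamma^\pm=\xi_i(\gamma^\pm)$: equivariance makes $\xi_i(\gamma^+)$ a $\rho(\gamma)$-fixed point in $\mathrm{Is}_i(V_\KK)$, and feeding a point $t\in\partial_\infty\Gamma\setminus\{\gamma^\pm\}$ for which $\xi_i(t)$ is transverse to $F_\gamma^-$ into the convergence dynamics $\gamma^nt\to\gamma^+$ gives $\rho(\gamma)^n\xi_i(t)=\xi_i(\gamma^nt)\to\xi_i(\gamma^+)$ on one hand and $\rho(\gamma)^n\xi_i(t)\to F_\gamma^+$ on the other; transversality of $\xi_i$ guarantees such a $t$ exists (its image does not land entirely in the non-transverse locus of $F_\gamma^-$), and uniqueness of limits gives $\xi_i(\gamma^+)=F_\gamma^+$, symmetrically $\xi_i(\gamma^-)=F_\gamma^-$.

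I expect the genuine obstacle to be the lower bound in~(2): the coarse super-additivity lemma together with the uniform transversality of Cartan flags that underlies it, and the subsequent upgrade of the bare properness in condition~(4) to honestly linear growth via the geometry of the hyperbolic group. Once the quasi-isometric embedding is established, properties~(1) and~(3), and the upper bound in~(2), are comparatively soft, drawing only on standard linear algebra of proximal elements and the dynamics of uniform convergence groups; the one further point that needs care is closing the identification $F_\gamma^\pm=\xi_i(\gamma^\pm)$ in~(3) without circularity.
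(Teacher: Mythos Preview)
The paper does not prove this proposition. Immediately before stating it, the text reads ``We recall, without proof several of the remarkable properties Anosov representations have,'' and the proposition carries only the citations \cite{Labourie_anosov, Guichard_Wienhard_DoD}. There is therefore nothing to compare your argument against within the paper itself.

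That said, your outline is broadly in the spirit of the cited sources and of \cite{GGKW_anosov}, from which the paper borrows its Definition~\ref{def:anosov}. Property~(1) and the upper bound in~(2) are indeed soft consequences of condition~(4) and finite generation, and your treatment of~(3) via singular-value gaps and the convergence dynamics on $\partial_\infty\Gamma$ is the standard one. The substantive step, as you correctly identify, is upgrading the bare properness in~(4) to linear growth; the super-additivity of $\alpha_i\circ\mu$ along geodesic words, controlled by uniform transversality of Cartan attractors tracked by $\xi_i$, is precisely the mechanism used in \cite{GGKW_anosov}. One point to tighten: your phrase ``otherwise minimisers over the spheres of radius $R$ would form a diverging sequence on which $\varphi$ stays bounded'' only produces a single bad element for each $R$, not a diverging sequence; what you actually need is that, since each sphere is finite, $\min_{|w|_\Gamma=R}\varphi(w)\to\infty$ as $R\to\infty$ by~(4), so some fixed $R$ works. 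With that adjustment the sketch is sound, but be aware you are reconstructing a proof the present paper deliberately omits.
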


Also the set of Anosov representations inside the representation variety has remarkable properties. 
\begin{proposition}\label{prop:properties_set}\cite{Labourie_anosov, Guichard_Wienhard_DoD, Canary_survey, GGKW_anosov}
\begin{enumerate}
\item 
The set $\mathrm{Hom}_{Q_i-Anosov}(\Gamma, \Sp(V_\KK, \omega_\KK))$  of $Q_i$-Anosov representation is open in $\mathrm{Hom}(\Gamma, \Sp(V_\KK, \omega_\KK))$. 
\item The  group $\mathrm{Out}(\Gamma)$ of outer automorphisms acts properly discontinuously on  
$\mathrm{Hom}_{Q_i-Anosov}(\Gamma, \Sp(V_\KK, \omega_\KK))/\Sp(V_\KK, \omega_\KK)$, the set of conjugacy classes of Anosov representations. 
\item The set of Anosov representations $\mathrm{Hom}_{Q_i-Anosov}(\Gamma, \Sp(V_\KK, \omega_\KK))$ descends to a well defined subset of the character variety. 
\end{enumerate}
\end{proposition}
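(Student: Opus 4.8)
All three assertions rest on~(1), so the plan is to prove openness first and then bootstrap from it. For~(1) I would argue not from the boundary-map definition but from a characterization that is manifestly stable under small perturbations. Following \cite{GGKW_anosov} (see also \cite{KapovichLeebPorti, KapovichLeebPorti14}), a representation $\rho\colon\Gamma\to\Sp(V_\KK,\omega_\KK)$ is $Q_i$-Anosov precisely when there are constants $C,c>0$ with
\[
\alpha_i(\mu(\rho(\gamma)))\ \geq\ C\,|\gamma|_\Gamma-c\qquad\text{for all }\gamma\in\Gamma,
\]
that is, the $i$-th singular-value gap of $\rho(\gamma)$ grows at a uniform exponential rate in the word length; and, crucially, such constants can already be read off from words of \emph{bounded} length by a subadditivity (Fekete-type) argument as in \cite{GGKW_anosov}. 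This makes the condition open and, moreover, shows that $(C,c)$ can be chosen uniformly on compact subsets of $\Hom(\Gamma,\Sp(V_\KK,\omega_\KK))$. Given this gap for a representation $\rho'$ near $\rho$, I would then reconstruct the Anosov data: for $t\in\partial_\infty\Gamma$ choose a geodesic ray $(\gamma_n)\to t$ in $\Gamma$; the gap forces the $i$-dimensional subspace spanned by the top $i$ singular directions of $\rho'(\gamma_n)$ to converge in $\mathrm{Is}_i(V_\KK)$, uniformly in $t$ and in $\rho'$, to a limit $\xi_i^{\rho'}(t)$ independent of the ray. Uniform convergence gives continuity and $\rho'$-equivariance of $\xi_i^{\rho'}$; the gap evaluated along a bi-infinite geodesic between $t$ and $t'$ gives transversality of $\xi_i^{\rho'}(t)$ and $\xi_i^{\rho'}(t')$; and the induced gap between the moduli of the eigenvalues of $\rho'(\gamma)$, together with discreteness (Proposition~\ref{prop:properties_individual}(1)), gives that $\xi_i^{\rho'}$ is dynamics preserving. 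Hence $\rho'$ is $Q_i$-Anosov, which proves~(1).

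For~(3) I would first show that $\Sp(V_\KK,\omega_\KK)$ acts \emph{properly} by conjugation on $\Hom_{Q_i-Anosov}(\Gamma,\Sp(V_\KK,\omega_\KK))$. On a compact set of Anosov representations the orbit maps $\gamma\mapsto\rho(\gamma)x_0$ are quasi-isometric embeddings with uniform constants (Proposition~\ref{prop:properties_individual}(2), made uniform via~(1)), the maps $\xi_i^{\rho}$ are injective (transversality forbids $\xi_i^{\rho}(t)=\xi_i^{\rho}(t')$ for $t\neq t'$, since an isotropic subspace is never transverse to itself), and they depend continuously on $\rho$. If $g_n\to\infty$ in $\Sp(V_\KK,\omega_\KK)$ with $\rho_n$ and $g_n\rho_n g_n^{-1}$ both in the compact set, then $g_n\circ\xi_i^{\rho_n}$ is the boundary map of $g_n\rho_n g_n^{-1}$; but an unbounded sequence acting on the compact space $\mathrm{Is}_i(V_\KK)$ collapses the generic part of any compact subset towards a point, so it cannot carry the uniformly embedded limit sets $\xi_i^{\rho_n}(\partial_\infty\Gamma)$ onto the limit sets of Anosov representations lying in the (fixed, compact) family --- a contradiction; compare \cite{Guichard_Wienhard_DoD, GGKW_anosov}. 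Properness forces the $\Sp(V_\KK,\omega_\KK)$-orbits in the Anosov locus to be closed and the quotient to be Hausdorff, so that locus maps homeomorphically onto a subset of the character variety; since closed orbits are separated in the character variety, this subset is both saturated and open (the Anosov locus being open by~(1) and conjugation invariant), which is~(3) and which also makes meaningful the compact sets $\mathcal{K}$ used below.

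For~(2) I would translate the problem into stable word lengths. Since translation length is conjugation invariant, applying the uniform quasi-isometric-embedding bounds to powers and passing to the limit gives, for a compact set $\mathcal{K}\subset\Hom_{Q_i-Anosov}(\Gamma,\Sp(V_\KK,\omega_\KK))/\Sp(V_\KK,\omega_\KK)$, a constant $D\geq1$ with
\[
D^{-1}\,|\gamma|_\infty\ \leq\ \ell_X(\rho(\gamma))\ \leq\ D\,|\gamma|_\infty\qquad\text{for all }\gamma\in\Gamma\text{ and all }[\rho]\in\mathcal{K},
\]
where $|\gamma|_\infty=\lim_k|\gamma^k|_\Gamma/k$ is the stable word length and $\ell_X(g)=\lim_k d_X(g^kx_0,x_0)/k$ the stable translation length in $X_\KK$. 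If $\phi\in\mathrm{Out}(\Gamma)$ satisfies $\phi\cdot\mathcal{K}\cap\mathcal{K}\neq\emptyset$, pick $[\rho]\in\mathcal{K}$ with $[\rho\circ\phi^{-1}]\in\mathcal{K}$; applying the bound to $\rho$ at $\phi^{-1}(\gamma)$ and to $\rho\circ\phi^{-1}$ at $\gamma$, and using $\ell_X((\rho\circ\phi^{-1})(\gamma))=\ell_X(\rho(\phi^{-1}(\gamma)))$, one gets
\[
D^{-2}\,|\gamma|_\infty\ \leq\ |\phi^{-1}(\gamma)|_\infty\ \leq\ D^{2}\,|\gamma|_\infty\qquad\text{for all }\gamma\in\Gamma .
\]
So every such $\phi^{-1}$ is a $D^{2}$-bi-Lipschitz automorphism of $(\Gamma,|\cdot|_\infty)$; there being only finitely many such outer automorphisms (the familiar finiteness underlying proper discontinuity on Teichm\"uller and Outer space, see \cite{Labourie_anosov, Guichard_Wienhard_DoD, Canary_survey}), the set $\{\phi\in\mathrm{Out}(\Gamma):\phi\cdot\mathcal{K}\cap\mathcal{K}\neq\emptyset\}$ is finite, which is~(2).

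The step on which everything turns --- and the main obstacle --- is the \emph{uniformity} claim in~(1): that the quantitative Anosov estimates, hence the quasi-isometric-embedding constants of Proposition~\ref{prop:properties_individual}, can be taken uniform over compact subsets of the representation variety. This is precisely what forces one to use the subadditive, cocycle-theoretic characterization of the Anosov condition rather than its boundary-map definition. A secondary technical point is the clean reconstruction of the boundary maps for perturbed representations and their continuous dependence on the representation; once both are available, parts~(2) and~(3) are essentially formal, granted the standard finiteness statement for bi-Lipschitz outer automorphisms invoked above.
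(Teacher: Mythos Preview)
The paper does not prove this proposition: it is stated with citations to \cite{Labourie_anosov, Guichard_Wienhard_DoD, Canary_survey, GGKW_anosov} and explicitly introduced as a recollection of known facts, with no proof environment. So there is nothing to compare your argument against in the paper itself.

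That said, your strategy is a faithful reconstruction of how the cited references actually proceed, and the logical architecture (prove~(1) with uniform constants, then derive~(2) and~(3)) is exactly right. A couple of comments on the details. For~(3), the route taken in \cite{Guichard_Wienhard_DoD, GGKW_anosov} is somewhat more direct than your limit-set collapsing argument: one shows that an Anosov representation is completely reducible (the existence of proximal elements forces any invariant subspace to have an invariant complement), hence its $\Sp(V_\KK,\omega_\KK)$-orbit in $\Hom$ is closed, and closed orbits are precisely what the character variety separates. Your dynamical argument is morally correct but would need care, since for a general divergent sequence $g_n\to\infty$ in $\Sp(V_\KK,\omega_\KK)$ it is not the $i$-th singular-value gap that necessarily blows up, so the collapsing on $\mathrm{Is}_i(V_\KK)$ is not to a single point but to a Schubert-type subvariety. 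For~(2), your reduction to a bi-Lipschitz bound on stable word length is the standard one; the finiteness of $\{\phi\in\mathrm{Out}(\Gamma): D^{-2}|\cdot|_\infty\le |\phi(\cdot)|_\infty\le D^{2}|\cdot|_\infty\}$ for a non-elementary hyperbolic group is a genuine input (it follows e.g.\ from Paulin's $\RR$-tree argument, or from properness of the $\mathrm{Out}(\Gamma)$-action on the projectivized space of hyperbolic length functions), and it is worth flagging explicitly as the external fact being invoked rather than folding it into the parenthetical citation.
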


\subsection{Examples}
In this subsection we describe several examples of Anosov representations 

\subsubsection{Hitchin representations} 
Let $\pi_1(\Sigma)$ be the fundamental group of a closed surface of genus $g\geq 2$, and let $\iota: \pi_1(\Sigma) \to \mathrm{SL}(2,\RR)$ be a discrete embedding. We denote by $\pi_{2n}: \SL(2,\RR)  \to \Sp(2n,\RR)$ the $2n$-dimensional irreducible representation of $\SL(2,\RR)$. It is unique up to conjugation in $\Sp(2n,\RR)$.  We call the composition \[\rho_{irr}: \pi_{2n} \circ \iota: \pi_1(\Sigma) \to Sp(2n,\RR)\] an irreducible Fuchsian representation. A {\em Hitchin representation} is any representation $\rho: \pi_1(\Sigma) \to \Sp(2n,\RR)$ which can be continuously deformed to an irreducible Fuchsian representation. Hitchin representation were introduced by Hitchin in \cite{Hitchin}, who showed that there are $2\times2^{2g}$ connected components of Hitchin representations into $\Sp(2n,\RR)$ which are all homeomorphic to a ball of dimension $(2g-2)\dim{\Sp(2n,\RR)}$. The $2\times2^{2g}$ connected components come from the fact that there are $2^{2g}$ different ways of lifting a discrete embedding $\iota: \pi_1(\Sigma) \to \mathrm{PSL}(2,\RR)$ to $\SL(2,\RR)$ and $2$ distinct connected components of the space of discrete embeddings $ \pi_1(\Sigma) \to \mathrm{PSL}(2,\RR)$. 

Hitchin representations were extensively studied by Labourie in \cite{Labourie_anosov}, who in particular showed that Hitchin representations are $Q_i$-Anosov for all $i= 1, \cdots, n$. 

\subsubsection{Maximal representations}
The Toledo number is a characteristic number of a representation $\rho: \pi_1(\Sigma) \to \Sp(2n,\RR)$. It satisfies $|T(\rho)|\leq n |\chi(\Sigma)|$, where $\chi(\Sigma)$ is the Euler characteristic of $\Sigma$. Representations for which $T(\rho) = n |\chi(\Sigma)|$ are called {\em maximal representations}. Maximal representations are $Q_n$-Anosov, in particular they are discrete embeddings \cite{Goldman_88, Burger_Iozzi_Wienhard_toledo, Burger_Iozzi_Labourie_Wienhard}.

The set of maximal representations is a union of connected components, which contains the set of Hitchin representations as a proper subset. By definition any Hitchin representation can be deformed to a representation that factors through $\SL(2,\RR)$. This does not hold for maximal representations. In fact the set of maximal representations into $\Sp(4,\RR)$ contains $2g-4$ connected components in which every representation is Zariski dense. We refer the reader to \cite{Gothen, GarciaPrada_Mundet, GarciaPrada_Gothen_MundetPubli, Guichard_Wienhard_InvaMaxi, Bradlow_GarciaPrada_Gothen_SP4} for more details on the number of connected components and topological invariants of maximal representations into the symplectic group. 

Maximal representations are in general only $Q_n$, not $Q_1$-Anosov. Since some of the constructions we describe below are specifically for $Q_1$-Anosov representations, it is interesting to ask. 
\begin{question}
Are there maximal representations $\rho: \pi_1(\Sigma) \to \Sp(2n,\RR)$ outside of the Hitchin component that are $Q_1$-Anosov or even $B$-Anosov? What is the set of maximal representation that are also $Q_1$-Anosov?
\end{question}

\subsubsection{Deformations of $\SL(2,\RR)$ embeddings}
Examples of $Q_i$-Anosov representations that are not maximal (and in particular not Hitchin) can easily be constructed by considering other embeddings of $\SL(2,\RR)$.

For example, consider a symplectic vector space $(\RR^2, \omega_0)$ and let $(V, \omega) = (\RR^2 \oplus \RR^2, \omega_0\oplus \epsilon \omega_0)$, where $\epsilon\in \{-1, +1\}$, and $\iota: \pi_1(\Sigma) \to \mathrm{SL}(2,\RR)$ a discrete embedding.
\begin{enumerate}
\item The representation $(\iota, 1): \pi_1(\Sigma) \to \SL(2,\RR) \times \SL(2,\RR) \subset \Sp(V, \omega)$ is $Q_1$-Anosov, but not maximal. Since the set of $Q_1$-Anosov representations is open, the same holds for small deformations of this representation.  
\item For $\epsilon = -1$ the representation $(\iota, \iota): \pi_1(\Sigma) \to \SL(2,\RR) \times \SL(2,\RR) \subset \Sp(V, \omega)$ is $Q_n$-Anosov, but not maximal. They same holds true for small deformations. Note that for $\epsilon = 1$ the representation $(\iota, \iota): \pi_1(\Sigma) \to \SL(2,\RR) \times \SL(2,\RR) \subset \Sp(V, \omega)$ is maximal. 
\end{enumerate} 

\subsubsection{$\SL(2,\CC)$ embeddings and their deformations}
Consider a two-dimensional complex symplectic vector space $(V_0^\CC, \omega_0^\CC)$. Then $\Sp(V_0^\CC, \omega_0^\CC)  \cong \SL(2,\CC)$. 
If we forget the complex structure, we can identify $V_0^\CC = V$ with the real four-dimensional vector space $V$, and the real part of $\omega_0^\CC$ defines a symplectic form $\omega$ on $V$. 
This gives a natural embedding $\pi: \SL(2,\CC)  \to \Sp(4,\RR)$. 

Composing a quasi-Fuchsian representation $\iota: \pi_1(\Sigma) \to \SL(2,\CC)$ with $\pi$ we get a $Q_n$-Anosov representation $\rho: \pi_1(\Sigma) \to \SL(2,\CC) \to \Sp(4,\RR)$. 

This embedding also gives interesting examples when $\Gamma$ is the fundamental group of a closed hyperbolic $3$-manifold, and $ \iota: \Gamma \to \SL(2,\CC)$ is the embedding of a cocompact lattice. 
Then $\rho:=\pi \circ \iota: \Gamma \to \Sp(4,\RR)$ is a $Q_n$-Anosov representation. It was explained to me by Gye-Seon Lee, that under the local isomorphism between $\SL(2,\CC)$ and $\SO(1,3)$ , and between $\Sp(4,\RR)$ and $\SO(2,3)$, the above embedding $\SL(2,\CC) \hookrightarrow \Sp(4,\RR)$ corresponds to the natural embedding $\SO(1,3)  \hookrightarrow \SO(2,3)$. It is then  a consequence of the work of Barbot \cite{BarbotDefAdSQF} that any deformation of $\rho$ is $Q_n$-Anosov. 

\section{Domains of Discontinuity}
In this section we recall a construction of domains of discontinuity for $Q_i$-Anosov representations into $\Sp(2n,\KK)$, $i = 1$ or $n$,  which was introduced in \cite{Guichard_Wienhard_DoD}. 
We denote by $\hat{}$ the nontrivial involution of the set $\{1,n\}$, i.e. $\hat{1} = n$ and $\hat{n}= 1$. 

\subsection{Construction and statement}
Let $\rho: \Gamma \to \Sp(2n,\KK)$ be a $Q_i$-Anosov representation, $ i = 1$ or $n$ and $\xi_i: \partial_\infty \Gamma \to \mathrm{Is}_i(V_\KK)$ its continuous $\rho$-equivariant boundary map. 
We are going to use $\xi_i$ to exhibit a set $\mathcal{K}_{\xi_i} \subset \mathrm{Is}_{\hat{i}} (V_\KK)$, such that the action of $\Gamma$  (via $\rho$) 
 on the complement of $\mathcal{K}_{\xi_i} $ in $\mathrm{Is}_{\hat{i}} (V_\KK)$ is proper. 
 
 Given an (isotropic) line $l \in \PP(V_\KK)$, we set
  \[
 K_l:= \{ L\in \mathrm{Lag}(V_\KK) \, |\, l \subset L\}.  
 \]
 In words, for a line $l$, $K_l$ is the set of Lagrangian subspaces containing $l$. Note that we can identify $K_l$ with the set $ \mathrm{Lag} (l^{\perp_{\omega_\KK}}/l)$. 
 
 Given a Lagrangian subspace $L \in \mathrm{Lag}(V_\KK)$ we set 
  \[
 K_L:= \{ l\in \PP(V_\KK)\, |\, l \subset L\}.  
 \]
 In words, for a Lagrangian subspace $L$, $K_L$ is the set of lines contained in $L$. Note that we can identify $K_L$ with $\PP (L)$. 

\begin{proposition}\cite[Proposition~8.1.]{Guichard_Wienhard_DoD}\label{thm:dod}
Let $\rho: \Gamma \to \Sp(2n,\KK)$ be a $Q_i$-Anosov representation, $ i =1$ or $n$ and $\xi_i: \partial_\infty \Gamma \to \mathrm{Is}_i(V_\KK)$ its continuous $\rho$-equivariant boundary map.
Set 
\[\mathcal{K}_{\xi_i} := \bigcup_{t\in \partial_\infty \Gamma} K_{\xi_i(t)} \subset \mathrm{Is}_{\hat{i}} (V_\KK).\]

The complement 
\[\Omega_{\xi_i} = \mathrm{Is}_{\hat{i}} (V_\KK)\backslash\mathcal{K}_{\xi_i}\] is a $\Gamma$-invariant open subset. The action of $\Gamma$ (via $\rho$) on  $\Omega_{\xi_i}$ is properly discontinuous and cocompact. 

\end{proposition}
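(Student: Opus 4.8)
The plan is to prove the three assertions in turn, the last being the real difficulty. Openness and $\Gamma$-invariance are formal: the incidence set $\mathcal{I}=\{(x,t)\in\mathrm{Is}_{\hat i}(V_\KK)\times\partial_\infty\Gamma : x\in K_{\xi_i(t)}\}$ is closed, since incidence of subspaces is a closed condition and $\xi_i$ is continuous, and $\partial_\infty\Gamma$ is compact, so the projection $\mathcal{I}\to\mathrm{Is}_{\hat i}(V_\KK)$ is a closed map whose image is $\mathcal{K}_{\xi_i}$; hence $\mathcal{K}_{\xi_i}$ is closed and $\Omega_{\xi_i}$ is open. Equivariance of $\xi_i$ gives $\rho(\gamma)K_{\xi_i(t)}=K_{\xi_i(\gamma t)}$, so $\rho(\gamma)\mathcal{K}_{\xi_i}=\mathcal{K}_{\xi_i}$, and $\Omega_{\xi_i}$ is $\Gamma$-invariant.

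The engine for the other two assertions is a contraction lemma on $\mathrm{Is}_{\hat i}(V_\KK)$: \emph{for every sequence $\gamma_m\to\infty$ in $\Gamma$ there are a subsequence and flags $F^+,F^-$ lying in the image of $\xi_i$ such that, for every compact $C\subset\mathrm{Is}_{\hat i}(V_\KK)\setminus K_{F^-}$, every accumulation point of every sequence $(\rho(\gamma_m)x_m)_m$ with $x_m\in C$ lies in $K_{F^+}$.} To prove it I would use Cartan decompositions $\rho(\gamma_m)=k_m\exp(\mu(\gamma_m))k_m'$, with $F_0^+$, $F_0^-$ the standard attracting and repelling $i$-flags, and pass to a subsequence along which $k_m\to k$ and $k_m'\to k'$; then the Cartan $i$-flags $k_mF_0^+$ and $(k_m')^{-1}F_0^-$ converge, say to $F^+$ and $F^-$, and since $\rho$ is Anosov these limits lie in $\xi_i(\partial_\infty\Gamma)$ --- the accumulation set of the Cartan flags of $\rho(\Gamma)$ being contained in the image of the boundary map (see \cite{GGKW_anosov}; compare Proposition~\ref{prop:properties_individual}(3)). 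By Definition~\ref{def:anosov}(4), $\alpha_i(\mu(\gamma_m))\to\infty$, i.e.\ the $i$-th singular-value gap of $\rho(\gamma_m)$ blows up, and a short computation in $\Sp(2n,\KK)$ then shows that on $\mathrm{Is}_{\hat i}(V_\KK)$ the diagonal factor $\exp(\mu(\gamma_m))$ drives the complement of $K_{F_0^-}$ into every neighbourhood of $K_{F_0^+}$. That these loci are of the form $K_{(\cdot)}$ rests on the elementary identity, for a line $l$ and a Lagrangian $L$, that $l\subset L$ if and only if $L\subset l^{\perp_{\omega_\KK}}$: when $i=1$ the Lagrangians outside the basin of attraction are exactly those lying in $(F_0^-)^{\perp_{\omega_\KK}}$, hence those in $K_{F_0^-}$, and the images of the remaining ones accumulate on Lagrangians containing $F_0^+$, hence on $K_{F_0^+}$; when $i=n$ the lines outside the basin are exactly those of $\PP(F_0^-)=K_{F_0^-}$, and the images of the others accumulate on $\PP(F_0^+)=K_{F_0^+}$. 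Conjugating back (so that the exceptional locus becomes $K_{(k_m')^{-1}F_0^-}$ and the target $K_{k_mF_0^+}$) and passing to the limit $m\to\infty$, using the continuous dependence of $K_{(\cdot)}$ on its argument, gives the lemma.

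Proper discontinuity is then immediate. Let $C\subset\Omega_{\xi_i}$ be compact and suppose $\{\gamma : \rho(\gamma)C\cap C\neq\emptyset\}$ were infinite; pick a diverging sequence $\gamma_m$ in it with $x_m\in C$ and $\rho(\gamma_m)x_m\in C$, and pass to the subsequence provided by the lemma. Since $F^-\in\xi_i(\partial_\infty\Gamma)$ we have $K_{F^-}\subset\mathcal{K}_{\xi_i}$, so $C\cap K_{F^-}=\emptyset$ and $C$ is a compact subset of $\mathrm{Is}_{\hat i}(V_\KK)\setminus K_{F^-}$; hence every accumulation point of $(\rho(\gamma_m)x_m)_m$ lies in $K_{F^+}\subset\mathcal{K}_{\xi_i}$. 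But that sequence stays in the compact set $C$, which is disjoint from $\mathcal{K}_{\xi_i}$, so no such accumulation point exists. This contradiction shows $\{\gamma : \rho(\gamma)C\cap C\neq\emptyset\}$ is finite, i.e.\ the action is properly discontinuous.

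Cocompactness is the main obstacle, and it is the step where the distinctly hyperbolic-group input enters: one uses the geodesic flow space $\hatGamma$ of $\Gamma$, a locally compact metric space with a flow on which $\Gamma$ acts properly and cocompactly, whose space of flow lines is $\partial^{(2)}\Gamma=(\partial_\infty\Gamma\times\partial_\infty\Gamma)\setminus\Delta$. I would reduce to finding $\varepsilon>0$ with $\rho(\Gamma)\cdot D_\varepsilon=\Omega_{\xi_i}$, where $D_\varepsilon:=\{x\in\Omega_{\xi_i}:d(x,\mathcal{K}_{\xi_i})\ge\varepsilon\}$ is compact; this suffices, as then $\Omega_{\xi_i}/\Gamma$ is the image of $D_\varepsilon$. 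If no such $\varepsilon$ existed, one would get $x_k\in\Omega_{\xi_i}$ with $\sup_{\gamma\in\Gamma}d(\rho(\gamma)x_k,\mathcal{K}_{\xi_i})\to 0$, and after extraction $x_k\to x_\infty\in\mathcal{K}_{\xi_i}$, say $x_\infty\in K_{\xi_i(t_\infty)}$. The idea is to ``shadow'' the approach $x_k\to x_\infty$ by a geodesic ray in $\Gamma$ converging to $t_\infty$ and to run the contraction lemma along it: because $x_k\in\Omega_{\xi_i}$ is transverse to $\xi_i(t_\infty)$ (that is, $x_k\notin K_{\xi_i(t_\infty)}$), applying $\rho$ of a suitable initial segment of the ray pulls $x_k$ a definite distance away from $\mathcal{K}_{\xi_i}$, and the uniformity of this distance as $k\to\infty$ is precisely what the proper cocompact action of $\Gamma$ on $\hatGamma$ delivers; this contradicts $\sup_\gamma d(\rho(\gamma)x_k,\mathcal{K}_{\xi_i})\to 0$. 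Making the uniformity precise is the delicate point, and it is carried out in \cite{Guichard_Wienhard_DoD} by working throughout on $\hatGamma$ and exploiting the transversality of $\xi_i$ along the two endpoints of a flow line; I would follow that argument. By contrast, the first three steps are formal once the contraction lemma is in hand, so I expect this final step to absorb essentially all of the work.
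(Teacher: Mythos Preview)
The paper does not give its own proof of this proposition: immediately after the statement and the following remark it writes ``We refer the reader to \cite{Guichard_Wienhard_DoD} for a proof of Theorem~\ref{thm:dod}.'' So there is nothing in the present paper to compare your argument against.

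That said, your sketch is in the spirit of the proof in \cite{Guichard_Wienhard_DoD}: openness and $\Gamma$-invariance are formal; proper discontinuity is derived from a contraction/expansion property of the $\rho(\gamma_m)$ on $\mathrm{Is}_{\hat i}(V_\KK)$ (there phrased via the Anosov condition on the flow space rather than via Cartan decompositions as you do, the latter being closer to \cite{GGKW_anosov,KapovichLeebPorti}); and cocompactness is obtained using the $\Gamma$-cocompact geodesic flow space $\hatGamma$, which you correctly identify as the substantive step and explicitly defer to \cite{Guichard_Wienhard_DoD}. Your outline for cocompactness is only heuristic --- the ``shadowing'' argument you describe is not quite how \cite{Guichard_Wienhard_DoD} proceeds (there one builds a $\Gamma$-equivariant map from a bundle over $\hatGamma$ onto $\Omega_{\xi_i}$ and pushes cocompactness forward) --- but since the paper itself supplies no proof, there is no discrepancy to flag.
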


\begin{remark}
\begin{enumerate}
\item Note that $\mathcal{K}_{\xi_i}$ is in fact the {\em disjoint} union of $ K_{\xi_i(t)}$, $t\in \partial_\infty \Gamma$. 
\item 
In general the domain of discontinuity $\Omega_{\xi_i}$ could be empty, and in fact this happens for example if $\Gamma = \pi_1(\Sigma)$ is a surface group, $n = 1$, and $\KK = \RR$, or if $\Gamma$ is the fundamental group of a closed hyperbolic $3$-manifold, $n= 1$, and $\KK = \CC$. 
When $\Gamma$ is a free group, then $\Omega_{\xi_i}$ is always nonempty. In general,  $\Omega_{\xi_i}$ is nonempty, as soon as $n$ is bigger than the dimension of $\partial_\infty \Gamma$. 
\end{enumerate}
\end{remark}

We refer the reader to \cite{Guichard_Wienhard_DoD} for a proof of Theorem~\ref{thm:dod}. Here we would like to illustrate how the construction of the domains of discontinuity can be used to deduce further information on proper actions on homogeneous space and compactifications of the corresponding quotient manifolds. More results in this direction are proved in \cite{Guichard_Wienhard_DoD,GGKW_anosov, GGKW_compactif}.

\subsection{Complexifying $Q_1$-Anosov representations}\label{sec:Q0}
Let $\rho: \Gamma \to \Sp(V, \omega)$ be a $Q_1$-Anosov representation with boundary map $\xi: \partial_\infty \Gamma \to \PP(V)$. Let $(V_\CC, \omega_\CC)$ be the complexification of $(V,\omega)$ and $\rho_\CC: \Gamma \to \Sp(V_\CC, \omega_\CC)$ the induced representation, which is also $Q_1$-Anosov. We denote by $\xi_\CC: \partial_\infty \Gamma \to \PP(V_\CC)$ its boundary map. 
Since $\xi_\CC$ is the complexification of $\xi$ we have 
\begin{equation}\label{eq:cc}
\overline{\xi_\CC(t)} = \xi_\CC(t)
\end{equation} for all $t \in \partial_\infty \Gamma$. 

By Theorem~\ref{thm:dod} we obtain a domain of discontinuity $\Omega_{\xi_\CC} \subset \mathrm{Lag}(V_\CC)$ on which $\Gamma$ acts (via $\rho_\CC$) properly discontinuous and cocompact. 
Let us denote the compact complex quotient manifold $\Gamma \backslash \Omega_{\xi_\CC}$ by $M_{\rho_\CC}$. 

\begin{question}\label{question:M}
What is the homeomorphism type of $M_{\rho_\CC}$? What are the properties of $M_{\rho_\CC}$ as complex manifold? 
\end{question}

In order to describe the relation of $\Omega_{\xi_\CC} $ with the $\Sp(V,\omega)$-orbits described above, let us analyze in more detail the structure of the set $\mathcal{K}_{\xi_\CC}$. By definition 
\[\mathcal{K}_{\xi_\CC} = \bigcup_{t \in \partial_\infty\Gamma} K_{\xi_\CC(t)} =  \bigcup_{t \in \partial_\infty\Gamma} \{ L \in  \mathrm{Lag}(V_\CC) \, |\, \xi_\CC(t) \subset L\}.
\]

\begin{lemma}\label{lem:R0}
The domain of discontinuity $\Omega_{\xi_\CC} $ contains $\mathcal{R}_0$. 
\end{lemma}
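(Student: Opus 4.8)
The plan is to show that every $W \in \mathcal{R}_0$ lies in $\Omega_{\xi_\CC}$, i.e.\ that $W \notin \mathcal{K}_{\xi_\CC}$. By definition of $\mathcal{K}_{\xi_\CC}$, this means showing that for no $t \in \partial_\infty\Gamma$ does the isotropic line $\xi_\CC(t)$ satisfy $\xi_\CC(t) \subset W$. So I would argue by contradiction: suppose $W \in \mathcal{R}_0$ and $\xi_\CC(t) \subset W$ for some $t$.

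\textbf{The key observation.}
The essential input is equation~\eqref{eq:cc}: since $\rho$ is a \emph{real} $Q_1$-Anosov representation and $\xi_\CC$ is the complexification of the real boundary map $\xi$, we have $\overline{\xi_\CC(t)} = \xi_\CC(t)$ for all $t$, i.e.\ the line $\xi_\CC(t)$ is defined over $\RR$. Hence, if $\xi_\CC(t) \subset W$, then also $\xi_\CC(t) = \overline{\xi_\CC(t)} \subset \overline{W}$, so $\xi_\CC(t) \subset W \cap \overline{W}$. But $W \in \mathcal{R}_0$ means precisely $\dim(W \cap \overline{W}) = 0$, so $W \cap \overline{W} = \{0\}$, which contradicts the fact that $\xi_\CC(t)$ is a (nonzero) line. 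Therefore no such $t$ exists, $W \notin \mathcal{K}_{\xi_\CC}$, and $W \in \Omega_{\xi_\CC}$.

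\textbf{Assembling the argument.}
Concretely I would: (i) recall from Section~\ref{sec:Q0} that $\rho_\CC$ is $Q_1$-Anosov with boundary map $\xi_\CC : \partial_\infty\Gamma \to \PP(V_\CC)$ satisfying $\overline{\xi_\CC(t)} = \xi_\CC(t)$; (ii) unwind the definition $\mathcal{K}_{\xi_\CC} = \bigcup_{t} \{ L \in \mathrm{Lag}(V_\CC) : \xi_\CC(t) \subset L\}$ so that $\Omega_{\xi_\CC} = \mathrm{Lag}(V_\CC) \setminus \mathcal{K}_{\xi_\CC}$ consists of those Lagrangians containing no line of the form $\xi_\CC(t)$; (iii) run the contradiction above, using that a nonzero subspace cannot be contained in $\{0\}$. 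Since this applies to every $W \in \mathcal{R}_0$, we conclude $\mathcal{R}_0 \subset \Omega_{\xi_\CC}$.

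\textbf{What is actually delicate.}
There is essentially no obstacle here — the proof is a one-line consequence of the reality of the boundary lines and the definition of $\mathcal{R}_0$. The only point requiring a moment's care is making sure that the relevant boundary map is genuinely $\xi_\CC$ (the complexification of the \emph{real} $\xi$) and not some other $Q_1$-equivariant map; this is guaranteed because $Q_1$-Anosov boundary maps are unique, being dynamics-preserving and thus pinned down on the dense set of attracting fixed points $\{\gamma^+ : \gamma \in \Gamma\}$, and $\xi_\CC$ manifestly has this property by Remark after Definition~\ref{def:anosov}(3). Hence $\overline{\xi_\CC(t)} = \xi_\CC(t)$ is legitimate, and the argument goes through.
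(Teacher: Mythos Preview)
Your argument is correct and is essentially identical to the paper's: both use that $\overline{\xi_\CC(t)} = \xi_\CC(t)$ forces any Lagrangian containing $\xi_\CC(t)$ to have nontrivial intersection with its conjugate, hence to lie outside $\mathcal{R}_0$. The paper phrases this as the contrapositive ($L\in\mathcal{K}_{\xi_\CC}\Rightarrow L\notin\mathcal{R}_0$) in a single line, while you run the equivalent contradiction; the extra remark on uniqueness of the boundary map is not needed since \eqref{eq:cc} is already stated.
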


\begin{proof}
Let $L\in \mathcal{K}_{\xi_\CC}$, then $L$ contains $\xi_\CC(t)$ for some $t\in \partial_\infty \Gamma$. By \eqref{eq:cc}, this implies that $\xi_\CC(t) \subset \overline{L} \cap L$, and in particular $L \notin \mathcal{R}_0$, hence $\mathcal{R}_0 \subset\Omega_{\xi_\CC} $
\end{proof} 

\begin{corollary}\label{cor:prop}
Let  $ \rho: \Gamma \to \Sp(V, \omega)$ be a $Q_1$-Anosov representation. Then $\Gamma$ acts (via $\rho$) properly discontinuous on $X_{p,q}$. 
\end{corollary}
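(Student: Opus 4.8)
The plan is to combine Lemma~\ref{lem:R0} with the embedding of $X_{p,q}$ into $\mathrm{Lag}(V_\CC)$ coming from Proposition~\ref{prop:orbits}, and then transport the properness statement of Proposition~\ref{thm:dod} across this embedding. Recall from Proposition~\ref{prop:orbits} that $\mathcal{R}_0 = \bigcup_{p+q=n} H_{p,q}$ with each $H_{p,q}$ an open $\Sp(V,\omega)$-orbit equivariantly identified with $X_{p,q}$. Thus we have an $\Sp(V,\omega)$-equivariant open embedding $X_{p,q} \hookrightarrow \mathrm{Lag}(V_\CC)$ whose image is $H_{p,q} \subset \mathcal{R}_0$. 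Via the representation $\rho_\CC$ (the complexification of $\rho$, which is again $Q_1$-Anosov by the Remark following Definition~\ref{def:anosov}), the action of $\Gamma$ on $X_{p,q}$ is conjugate to its action on $H_{p,q}$.

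First I would invoke Theorem~\ref{thm:dod} for $\rho_\CC$ with $i=1$ (so $\hat i = n$): the action of $\Gamma$ on $\Omega_{\xi_\CC} = \mathrm{Lag}(V_\CC) \setminus \mathcal{K}_{\xi_\CC}$ is properly discontinuous. Next I would apply Lemma~\ref{lem:R0}, which gives $\mathcal{R}_0 \subset \Omega_{\xi_\CC}$, hence in particular $H_{p,q} \subset \Omega_{\xi_\CC}$. The key general fact I then need is that the restriction of a properly discontinuous action to an invariant subspace is again properly discontinuous: since $H_{p,q}$ is a $\Gamma$-invariant subset of $\Omega_{\xi_\CC}$ (it is even an $\Sp(V,\omega)$-orbit, so certainly $\rho_\CC(\Gamma)$-invariant), the $\Gamma$-action on $H_{p,q}$ is properly discontinuous. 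Finally, transporting along the equivariant homeomorphism $X_{p,q} \cong H_{p,q}$ yields that $\Gamma$ acts properly discontinuously on $X_{p,q}$ via $\rho$, which is the claim.

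There is no serious obstacle here; the statement is essentially a formal consequence of the three ingredients above. The only points requiring a word of care are: (a) that the $Q_1$-Anosov hypothesis on $\rho$ really does pass to $\rho_\CC$ over $V_\CC$ — this is exactly item~(3) of the Remark after Definition~\ref{def:anosov}, and the boundary map $\xi_\CC$ satisfies the reality condition \eqref{eq:cc} used in Lemma~\ref{lem:R0}; and (b) that "properly discontinuous" is inherited by invariant subspaces, which is immediate from the definition (a compact set in the subspace is compact in the ambient space, so only finitely many group elements move it to meet itself). One could also note that properness is in fact inherited by \emph{any} invariant subset, not merely an open or closed one, which is all we use. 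If one additionally wanted cocompactness of the $\Gamma$-action on some natural locus, that would require identifying which part of $\Omega_{\xi_\CC}$ the union $\bigcup_{p+q=n} H_{p,q} = \mathcal{R}_0$ fills up, but the corollary as stated asks only for proper discontinuity, so this is not needed.
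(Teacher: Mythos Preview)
Your proposal is correct and follows exactly the argument the paper intends: the corollary is stated immediately after Lemma~\ref{lem:R0} with no separate proof, and the implicit reasoning is precisely to combine Proposition~\ref{prop:orbits} (the identification $X_{p,q}\cong H_{p,q}\subset\mathcal{R}_0$), Lemma~\ref{lem:R0} ($\mathcal{R}_0\subset\Omega_{\xi_\CC}$), and Proposition~\ref{thm:dod} (proper discontinuity on $\Omega_{\xi_\CC}$). Your additional remarks about passing to the complexification and about restricting properly discontinuous actions to invariant subsets just make explicit what the paper leaves to the reader.
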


Analogously to Question~\ref{question:M} one can ask 
\begin{question}\label{question:X}
What is the homeomorphism type of $N^{p,q}_\rho = \Gamma \backslash X_{p,q}$? What are the properties of $N^{p,q}_\rho$ as complex manifold? 
\end{question}

Note that the non-compact manifolds $N^{p,q}_\rho = \Gamma \backslash X_{p,q}$ embed into $M_{\rho_\CC}$, and since $M_{\rho_\CC}$ is compact, the closure of $N^{p,q}_\rho$ in $M_{\rho_\CC}$ provides a compactification of the locally (affine) symmetric manifold $\Gamma \backslash X_{p,q}$. 

Let us describe in a bit more detail this compactification for the locally Riemannian symmetric space $\Gamma \backslash X_\RR(n)$.  Since the compactification of $N_\rho = \Gamma \backslash X(n)$ obtained by taking the closure in $M_{\rho_\CC}$ is the quotient of  $cl(H_{n,0}) \cap \Omega_{\xi_\CC} $ by the action of $\Gamma$ (via $\rho$), we describe the set $cl(H_{n,0}) \cap \Omega_{\xi_\CC} $. 

The compactification of $X_\RR(n)$ is given by  
\[cl(H_{n,0}) = \bigcup_{n' \leq n} H^{i}_{n',0},\] where $i = n-n'$. 
The set 
$H^{i}_{n',0}$ is a fiber bundle over the space  $\mathrm{Is}_i(V)$ of $i$-dimensional isotropic subspaces of $V$ with fiber isomorphic to $X_\RR(n')$.

Given $l \in \PP(V)$ we set 
\[S^i(l) = \{ Z \in \mathrm{Is}_i(V) \, |\, l \subset Z\}.\] 

\begin{lemma}
The intersection $\mathcal{K}_{\xi_\CC}\cap H^{i}_{n',0}$ is the restriction of the bundle $H^{i}_{n',0}$ to the subset $S^i_\xi := \bigcup_{t\in \partial_\infty \Gamma} S^i(\xi(t))$. 
\end{lemma}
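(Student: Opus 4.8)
The plan is to unwind the definitions and use the explicit description of the bundle projection $\pi\colon H^{i}_{n',0}\to\mathrm{Is}_i(V)$ coming from Proposition~\ref{prop:orbits}: for $W\in H^{i}_{n',0}\subset\mathcal{R}_i$ the subspace $Z:=W\cap\overline{W}$ is $i$-dimensional and conjugation-invariant, hence the complexification of a real isotropic $i$-plane $Z'\subset V$, and $\pi(W)=Z'$. With this notation, the assertion that $\mathcal{K}_{\xi_\CC}\cap H^{i}_{n',0}$ is the restriction of the bundle to $S^i_\xi$ means precisely that $\mathcal{K}_{\xi_\CC}\cap H^{i}_{n',0}=\pi^{-1}(S^i_\xi)$, so it suffices to prove, for $W\in H^{i}_{n',0}$ with $\pi(W)=Z'$, the equivalence $W\in\mathcal{K}_{\xi_\CC}\iff Z'\in S^i_\xi$.

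First I would isolate the following elementary observation. For every $t\in\partial_\infty\Gamma$ the complex line $\xi_\CC(t)$ is \emph{real}, i.e.\ $\overline{\xi_\CC(t)}=\xi_\CC(t)$ by \eqref{eq:cc}; thus $\xi_\CC(t)=\xi(t)\otimes_\RR\CC$ inside $V_\CC=V\otimes_\RR\CC$. Consequently, for $W\in\mathcal{R}_i$ one has $\xi_\CC(t)\subset W$ if and only if $\xi_\CC(t)\subset W\cap\overline{W}=Z$: indeed, if $\xi_\CC(t)\subset W$ then applying complex conjugation gives $\xi_\CC(t)=\overline{\xi_\CC(t)}\subset\overline{W}$, so $\xi_\CC(t)\subset Z$, while the converse is immediate since $Z\subset W$. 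Finally, since $Z=Z'\otimes_\RR\CC$ and complexification is an inclusion-preserving bijection between real subspaces of $V$ and conjugation-invariant complex subspaces of $V_\CC$ (with inverse $U\mapsto U\cap V$), the containment $\xi_\CC(t)\subset Z$ is equivalent to $\xi(t)\subset Z'$, i.e.\ to $Z'\in S^i(\xi(t))$.

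Putting these together: $W\in\mathcal{K}_{\xi_\CC}$ means $\xi_\CC(t)\subset W$ for some $t\in\partial_\infty\Gamma$, which by the above chain of equivalences holds if and only if $\xi(t)\subset Z'$ for some $t$, i.e.\ if and only if $Z'=\pi(W)\in\bigcup_{t\in\partial_\infty\Gamma}S^i(\xi(t))=S^i_\xi$. This gives both inclusions at once and proves the lemma.

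I do not expect a genuine obstacle here; the only point needing a little care is to make precise that ``restriction of the bundle $H^{i}_{n',0}$ to $S^i_\xi$'' is $\pi^{-1}(S^i_\xi)$, and to note that the fiber coordinate of $W$ (a Lagrangian $Y$ of $Z^{\perp_{\omega_\CC}}/Z$ with $Y\cap\overline{Y}=\{0\}$, in the language of the proof of Proposition~\ref{prop:orbits}) is irrelevant: the condition $\xi_\CC(t)\subset W$ only detects the conjugation-invariant part $Z$ of $W$, which is precisely the base point $\pi(W)$.
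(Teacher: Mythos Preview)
Your proof is correct and follows essentially the same argument as the paper: identify $Z'=\pi(W)$ via $Z'_\CC=W\cap\overline W$, use \eqref{eq:cc} to pass from $\xi_\CC(t)\subset W$ to $\xi_\CC(t)\subset W\cap\overline W$, and then translate this into the real condition $\xi(t)\subset Z'$. Your version is simply more explicit about the intermediate equivalences (and about what ``restriction of the bundle'' means), but the route is the same.
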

\begin{proof}
Consider a complex Lagrangian $W \in H^{i}_{n',0}$ and let $Z' \in\mathrm{Is}_i(V)$ be the real $i$-dimensional isotropic subspace such that $Z'_\CC = W \cap \overline{W}$. Then $W$  is in $\mathcal{K}_{\xi_\CC}$ if and only if there exists $t\in \partial_\infty \Gamma$  with $\xi_\CC(t) \subset W\cap \overline{W} = Z'_\CC$. This happens by \eqref{eq:cc} if and only if $\xi(t) \in Z'$, i.e. $Z' \in S^i(\xi(t))$.
\end{proof}

\subsubsection{Deforming real $Q_1$-Anosov representation}
In the previous section we analysed the structure of the domain of discontinuity of a $Q_1$-Anosov representation $\rho': \Gamma \to \Sp(V_\CC, \omega_\CC)$ with boundary map $\xi': \partial_\infty \Gamma \to \PP(V_\CC)$ 
in the special case, when $\rho' = \rho_\CC$  is the complexification of a real $Q_1$-Anosov representation $\rho: \Gamma \to \Sp(2n,\RR)$. When we deform $\rho_\CC$ to a $Q_1$-Anosov representation $\rho': \Gamma \to \Sp(V_\CC, \omega_\CC)$ which does not preserve the real structure, the domains of discontinuity and the quotient manifolds 
$M_{\rho_\CC}$ and $M_{\rho'}$ are homeomorphic \cite[Theorem 9.12]{Guichard_Wienhard_DoD}, but $M_{\rho'}$ is not anymore invariant under complex conjugation, and there is no equivariant decomposition of $M_{\rho'}$ with respect to the real structure. 

The problem of gaining a better understanding of $M_{\rho'}$ is of particular interest when $\Gamma = \pi_1(\Sigma)$ is a surface group.  

In the case when $\mathrm{dim}(V) = 2$, $Q_1$-Anosov representations of $\pi_1(\Sigma) \to \SL(2,\RR) \hookrightarrow \SL(2,\CC)$ are discrete embeddings, 
i.e. a Fuchsian representations. The set $\mathcal{K}_{\xi_\CC}$ we remove is $\RR\PP^1 \subset \CC\PP^1$, and the domain of discontinuity $\Omega_{\xi_\CC} = \Omega^+ \cup \Omega^-$ is the union of the upper and the lower hemisphere. The  quotient manifold $M_{\rho_\CC}$ is the union of two Riemann surfaces which are homeomorphic to $\Sigma$ and carry - up to an orientation reversing isometry the same conformal structure.  
A  $Q_1$-Anosov representation $\rho':\Gamma \to \SL(2,\CC)$  is precisely a quasi-Fuchsian representation of $\pi_1(\Sigma)$. In this case $\mathcal{K}_{\xi_\CC} \subset \CC\PP^1$ is a quasi-circle. 
The quotient manifold $M_{\rho' }$ is still the union of two Riemann surfaces, now with different conformal structures. 

In this case, $n=1$, the Ahlfors-Bers simultaneous uniformization theorem, gives a parametrisation of the space of $Q_1$-Anosov representations of $\pi_1(\Sigma) \to \SL(2,\CC)$ by two copies of the Teichm\"uller space of $\Sigma$. 
It is an intriguing question to ask whether an analogue of the simultaneous uniformization theorem holds for deformations of Hitchin representations when $\mathrm{dim}(V) > 2$. 
A question along these lines was first raised by Fock and Goncharov \cite[Section~11]{Fock_Goncharov}. 

In order to make this question more precise we define

\begin{definition}\label{defi:QH}
The set of {\em Quasi-Hitchin representations} is the set of representation $\rho': \pi_1(\Sigma) \to \Sp(2n,\CC)$ which are $B$-Anosov with respect to a full isotropic flag, and which can be deformed to a  
representation \[\rho_\CC: \pi_1(\Sigma) \to \Sp(2n,\RR) \hookrightarrow \Sp(2n,\CC),\] which is the complexification of a Hitchin representation into $\Sp(2n,\RR)$. 

The set of {\em $Q_1$-Quasi-Hitchin representations} is the set of representation $\rho': \pi_1(\Sigma) \to \Sp(2n,\CC)$ which are $Q_1$-Anosov, and which can be deformed to a  
representation $\rho_\CC: \pi_1(\Sigma) \to \Sp(2n,\RR) \hookrightarrow \Sp(2n,\CC)$, which is the complexification of a Hitchin representation into $\Sp(2n,\RR)$. 
\end{definition}

\begin{question}
Is there a meaningful generalisation of the simultaneous uniformization theorem to Hitchin representations into $\Sp(2n,\RR)$? 
Is there a parametrisation of the set of Quasi-Hitchin representations (or of the set of $Q_1$-Quasi-Hitchin representations) in terms of Hitchin components for $\Sp(2n,\RR)$?
\end{question}

Note that the domain of discontinuity $\Omega_{\xi_\CC}$ always contains both the (upper) Siegel space $H_{n,0}$, as well as the (lower) Siegel space $H_{0,n}$,  which are mapped to each other under complex conjugation. When $n=1$ these are just the upper and the lower hemisphere. However, in the general case   $\Omega_{\xi_\CC}$ is connected, and the various open quotient manifolds $\Gamma \backslash H_{p,q}$ are glued along their boundaries in $M_{\rho_\CC}$. 

\begin{question}
How can one exhibit deformations of these open quotient manifolds $\Gamma \backslash H_{p,q}$ in $M_{\rho'}$ for a complex deformation $\rho'$ of $\rho_\CC$.
\end{question}

\subsection{Products} 
Let us describe another example, where the construction of the domain of discontinuity for $Q_1$-Anosov representation leads to some interesting consequences. We refer to \cite{GGKW_anosov, GGKW_compactif} for more details and proofs. 

Consider the symplectic vector space $(V_\KK, \omega_\KK)$. Let \[(V', \omega') = (V_\KK \oplus V_\KK, \omega_\KK \oplus -\omega_\KK),\] and let 
\[ \Sp(V_\KK, \omega_\KK) \times \Sp(V_\KK, -\omega_\KK)  \hookrightarrow  \Sp(V', \omega')\] be the corresponding embedding. 

Assume that  $\rho_0 : \Gamma \to \Sp(V_\KK,\omega_\KK)$ is a $Q_1$-Anosov with boundary map $\xi_0: \partial_\infty \Gamma \to \PP(V_\KK)$ and consider the representation 

\[\rho = (\rho_0, 1) : \Gamma \to \Sp(V_\KK, \omega_\KK) \times \Sp(V_\KK, - \omega_\KK)  \hookrightarrow \Sp(V', \omega'),\] 
where $1$ is the trivial representation. Then $\rho$ is $Q_1$-Anosov as a representation into $\Sp(V', \omega')$ with boundary map $\xi:\partial_\infty \Gamma \to \PP(V')$ given by 
\[t \mapsto \xi_0(t) \oplus 0 \in \PP(V_\KK \oplus 0) \subset \PP (V').\]

So the above construction provides a domain of discontinuity $\Omega_\xi \subset \mathrm{Lag}(V')$, obtained as the complement of 
\[
\mathcal{K}_\xi = \bigcup_{t\in \partial_\infty \Gamma} K_\xi(t)  = \bigcup_{t\in \partial_\infty \Gamma}\{ W \in \mathrm{Lag}(V')\, |\, \xi(t) \subset W\}.
\]

With respect to the symplectic splitting $V' = V_\KK \oplus V_\KK$, the space  $\mathrm{Lag}(V')$ decomposes into several $\Sp(V_\KK, \omega_\KK) \times \Sp(V_\KK, - \omega_\KK)$. 

The orbits are 
\begin{equation}\label{eq:orbits}
\mathcal{T}_i := \{ W \in \mathrm{Lag}(V')\, |\, \mathrm{dim}(W \cap V_\KK\oplus 0) = i\}.
\end{equation} 
The open orbit is $\mathcal{T}_0$. It contains in particular the diagonal 
\[\Delta(V_\KK) = \{ (v,v) \in V'\, |\, v \in V_\KK\},\] 
whose $\Sp(V_\KK, \omega_\KK) \times \Sp(V_\KK, - \omega_\KK)$-orbit identifies with 
\[\Sp(V_\KK, \omega_\KK) \times \Sp(V_\KK, - \omega_\KK)/ \Delta(\Sp(V_\KK, \omega_\KK)) \cong \Sp(V_\KK, \omega_\KK),\] 
where $\Delta: \Sp(V_\KK, \omega_KK) \to \Sp(V_\KK, \omega_\KK) \times \Sp(V_\KK, - \omega_\KK)$ denotes the diagonal embedding. 

\begin{proposition}
The domain of discontinuity $\Omega_\xi \subset \mathrm{Lag}(V')$ contains $\mathcal{T}_0$ as an open and dense submanifold. The quotient manifold $N_\rho= \Gamma \backslash \mathcal{T}_0$, which identifies with $\Gamma\backslash \Sp(V_\KK, \omega_\KK)$, embeds into $M_\rho = \Gamma\backslash \Omega_\xi$ as an open and dense submanifold. In particular, the manifold $M_\rho$ provides a compactification of $ N_\rho=\Gamma\backslash \Sp(V_\KK, \omega_\KK)$. 
\end{proposition}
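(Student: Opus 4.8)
The plan is to apply Proposition~\ref{thm:dod} to the representation $\rho=(\rho_0,1)$, which is $Q_1$-Anosov into $\Sp(V',\omega')$ with boundary map $\xi(t)=\xi_0(t)\oplus 0$, and then to locate $\mathcal{T}_0$ relative to the bad set $\mathcal{K}_\xi\subset\mathrm{Lag}(V')$. The key point to establish is the inclusion $\mathcal{T}_0\subset\Omega_\xi$, i.e.\ that no Lagrangian $W$ transverse to $V_\KK\oplus 0$ can contain any line $\xi_0(t)\oplus 0$. This is immediate: if $\xi(t)=\xi_0(t)\oplus 0\subset W$ then $W\cap(V_\KK\oplus 0)\supsetneq 0$, so $\dim(W\cap(V_\KK\oplus 0))\geq 1$ and $W\notin\mathcal{T}_0$. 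Hence $\mathcal{K}_\xi\cap\mathcal{T}_0=\emptyset$, so $\mathcal{T}_0\subset\Omega_\xi$, and by Proposition~\ref{thm:dod} the action of $\Gamma$ on $\Omega_\xi$ (hence on $\mathcal{T}_0$) is properly discontinuous and cocompact on $\Omega_\xi$.

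Next I would identify the quotient $\Gamma\backslash\mathcal{T}_0$. Since $\mathcal{T}_0$ is the open $\Sp(V_\KK,\omega_\KK)\times\Sp(V_\KK,-\omega_\KK)$-orbit through the diagonal $\Delta(V_\KK)$ (this orbit is open because transversality of $W$ to $V_\KK\oplus 0$ is an open condition, and one checks the orbit map is submersive), and the stabilizer of $\Delta(V_\KK)$ is the diagonal subgroup $\Delta(\Sp(V_\KK,\omega_\KK))$, we get the equivariant identification $\mathcal{T}_0\cong\Sp(V_\KK,\omega_\KK)\times\Sp(V_\KK,-\omega_\KK)/\Delta(\Sp(V_\KK,\omega_\KK))\cong\Sp(V_\KK,\omega_\KK)$, where $\Gamma$ acts through $\rho$, i.e.\ through $\rho_0$ by left translation on $\Sp(V_\KK,\omega_\KK)$. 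Thus $\Gamma\backslash\mathcal{T}_0\cong\Gamma\backslash\Sp(V_\KK,\omega_\KK)$ with $\Gamma$ embedded via $\rho_0$ (an embedding, with discrete image, by Proposition~\ref{prop:properties_individual}(1)).

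For density of $\mathcal{T}_0$ in $\Omega_\xi$: the complement $\bigcup_{i\geq 1}\mathcal{T}_i$ is a finite union of lower-dimensional orbits (each $\mathcal{T}_i$ for $i\geq 1$ is determined by a nontrivial incidence condition and is a locally closed submanifold of strictly smaller dimension), hence $\mathcal{T}_0$ is open and dense in all of $\mathrm{Lag}(V')$, and a fortiori open and dense in the open subset $\Omega_\xi$. Passing to the quotient, $\Gamma\backslash\mathcal{T}_0=N_\rho$ is open and dense in $M_\rho=\Gamma\backslash\Omega_\xi$, and since $M_\rho$ is compact by Proposition~\ref{thm:dod}, its closure $M_\rho$ is a compactification of $N_\rho\cong\Gamma\backslash\Sp(V_\KK,\omega_\KK)$.

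The only genuinely substantive input is Proposition~\ref{thm:dod} (properness and cocompactness of the action on $\Omega_\xi$), which we are assuming; everything else is a transversality/dimension bookkeeping argument. The one place that requires a little care — the main obstacle, such as it is — is verifying that $\mathcal{T}_0$ is the full open orbit and computing its stabilizer, i.e.\ that a Lagrangian graph of a symplectic isomorphism $V_\KK\to V_\KK$ correctly parametrizes $\mathcal{T}_0$; this is the standard "graph of a symplectomorphism is Lagrangian in $V\oplus \bar V$" fact, adapted here to read off that the stabilizer of $\Delta(V_\KK)$ is exactly $\Delta(\Sp(V_\KK,\omega_\KK))$.
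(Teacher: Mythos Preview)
The paper does not actually supply a proof of this proposition; it states the result and refers the reader to \cite{GGKW_anosov, GGKW_compactif} for details. What the paper does provide is the setup immediately preceding the proposition: the orbit decomposition \eqref{eq:orbits} and the identification of $\mathcal{T}_0$ with $\Sp(V_\KK,\omega_\KK)$ via the diagonal. Your argument is the natural completion of that setup and is correct.

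Concretely: the inclusion $\mathcal{T}_0\subset\Omega_\xi$ follows exactly as you say, since $\xi(t)\subset V_\KK\oplus 0$ forces any Lagrangian containing $\xi(t)$ to lie in $\bigcup_{i\geq 1}\mathcal{T}_i$; this is the direct analogue of the argument in Lemma~\ref{lem:R0}. The identification $\mathcal{T}_0\cong\Sp(V_\KK,\omega_\KK)$ via graphs of symplectomorphisms is already asserted in the paper before the proposition, and your stabilizer computation confirms it. Density follows because $\mathcal{T}_0$ is the complement of a proper analytic subvariety of $\mathrm{Lag}(V')$, hence open and dense there and a fortiori in $\Omega_\xi$. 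Compactness of $M_\rho$ is Proposition~\ref{thm:dod}. There is no gap; your proof is essentially what the cited references do in this special case.
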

%

\subsection{$Q_n$-Anosov representations}
Let $\rho: \Gamma \to \Sp(2n,\RR)$ be a $Q_n$-Anosov representation with boundary map $\xi: \partial_\infty \Gamma \to \mathrm{Lag}(\RR^{2n})$. 
Then the construction of the domain of discontinuity above provides a domain of discontinuity $\Omega_\xi \subset \PP(\RR^{2n})$. In particular, the quotient manifold 
 $M_\rho: \Gamma \backslash \Omega_\xi$ is endowed with a projective structure. For Hitchin representations $\rho: \pi_1(\Sigma) \to \Sp(4,\RR)$ this domain of discontinuity was considered in detail in \cite{Guichard_Wienhard_Duke}.
 
%

It is however also of interest to consider subsets in $\mathrm{Lag}(\RR^{2n})$ on which the image of a $Q_n$-Anosov representation acts properly. 

Let us consider the 2n-dimensional symplectic vector space $(V_0, \omega_0)$ and set $(V, \omega) = (V_0 \oplus V_0, \omega_0 \oplus -\omega_0)$. Let 
$\Sp(V_0, \omega_0) \times \Sp(V_0, -\omega_0) \hookrightarrow  \Sp(V, \omega)$ be the corresponding embedding. 

Assume that  $\rho_1, \rho_2 : \Gamma \to \Sp(V_0,\omega_0)\cong \Sp(2n,\RR)$ are $Q_n$-Anosov representations, 
 then the representation 
\[\rho = (\rho_1, \rho_2): \Gamma \to \Sp(V_0,\omega_0)\times \Sp(V_0, -\omega_0) \hookrightarrow  \Sp(V, \omega)\cong \Sp(4n,\RR)\]
 is $Q_{2n}$-Anosov. 
Let $\xi: \partial_\infty \Gamma \to \mathrm{Lag}(V)$ be the corresponding boundary map. 
Instead of the domain of discontinuity $\Omega_\xi \subset \PP(V)$ constructed above, let us consider the set $\mathcal{D}_\xi \subset \mathrm{Lag}(V)$ defined as follows. 

For a Lagrangian $L \in \mathrm{Lag}(V)$  we consider \[N_L = \{ L' \in \mathrm{Lag}(V)\, |\, L' \cap L \neq \{ 0\}\},\] the set of all Lagrangians which are not transverse to $L$, and set 
\[
\mathcal{N}_\xi = \bigcup_{t\in \partial_\infty \Gamma} N_\xi(t). 
\]
Note that here $N_\xi(t)$ and $N_\xi(t')$ for $t \neq t'$ will in general intersect, and the union is {\em not} disjoint. 

We set $\mathcal{D}_\xi $ to be the complement of $\mathcal{N}_\xi$ in $\mathrm{Lag}(V)$. 
The action of $\rho$ on $\mathcal{D}_\xi $ is properly discontinuous, but will not be cocompact. 

When $n=1$, then the Toledo number of the representation $\rho = (\rho_1, \rho_2): \Gamma \to \Sp(4,\RR)$ is zero. 
In that case$\mathcal{T}_0 \cong \SL(2,\RR)$ can be identified with Anti-de Sitter space, and the set $\mathcal{D}_\xi \cap \mathcal{T}_0\subset \mathrm{Lag}(V)$ is described in detail in \cite{BarbotDefAdSQF}, where also  its relation to the domain of discontinuity in Anti-de Sitter space defined by Mess in \cite{Mess} is described. 

\begin{question}
Develop a description of the set $\mathcal{D}_\xi \cap \mathcal{T}_0\subset \mathrm{Lag}(V)$ when $n\geq 2$. 
\end{question}

\section{Some questions} 
Let us mention a few other questions concerning Anosov representations. 

\subsection{Connected components} 
There are several examples known, where entire connected components of the space of homomorphisms $\mathrm{Hom}(\Gamma, G)$ consist of Anosov representations. In the case when $\Gamma$ is a surface group 
this happens for example for the set of Hitchin representations or the set of maximal representations. But this phenomena also occurs when $\Gamma$ is the fundamental group of a closed hyperbolic manifold of dimension $n$. In this case connected component which exist entirely of Anosov representations have been described by Benoist \cite{Benoist_CD1} and by \cite{BarbotDefAdSQF, Barbot_Merigot_fusionPubli}. 

\begin{question}
For which finitely generated word hyperbolic groups $\Gamma$ and for which non compact simple Lie groups $G$ are there entire connected components in $\mathrm{Hom}(\Gamma, G)$ which consist entirely of Anosov representations? 
Is there a common underlying structure that makes this possible? 
\end{question}

In some cases the set of Anosov representation consists of essentially one connected component, for example for $Q_1$-Anosov representations into $\SL(2,\KK)$, or for the Anosov representations considered in \cite{BarbotDefAdSQF}. 
But in general the set of Anosov representations has several connected components, which contain representations of rather different geometric flavour. 
\begin{question}
Investigate the set of connected components of the space of Anosov representations. 
\end{question}

\subsection{Quasi-isometry invariance} 
Recently, Haissinsky \cite{Haissinsky} proved that the class of convex cocompact Kleinian groups is quasi-isometrically rigid, i.e. if $\Gamma$ is a finitely generated group, which is quasi-isometric to a discrete convex cocompact subgroup $\Lambda < \PSL(2,\CC)$, then there exists a finite index subgroup $\Gamma'$ of $\Gamma$ and an isomorphism of $\Gamma'$ onto a discrete convex cocompact subgroup in $\PSL(2,\CC)$. 

For $\PSL(2,\CC)$ and $\Gamma$ a finitely generated hyperbolic group, a representations $\rho: \Gamma \to \PSL(2,\CC)$ is Anosov if and only if $\rho$ has finite kernel and the image of $\rho$ is a discrete convex cocompact subgroup in $\PSL(2,\CC)$. 
The following questions are thus a natural generalisation of Haissinsky's theorem
\begin{question}
Is the class of finitely generated groups that admit a $Q_i$-Anosov representation into $\Sp(2n,\KK)$ quasi-isometrically rigid? 
Is the class of finitely generated groups that admit any Anosov representation into a non compact simple Lie group quasi-isometrically rigid?

\end{question}

%
%
%

\def\cprime{$'$}
\providecommand{\bysame}{\leavevmode\hbox to3em{\hrulefill}\thinspace}
\providecommand{\MR}{\relax\ifhmode\unskip\space\fi MR }
\providecommand{\MRhref}[2]{%
  \href{http://www.ams.org/mathscinet-getitem?mr=#1}{#2}
}
\providecommand{\href}[2]{#2}

\end{document}